\newtheorem{Thm}{Theorem}[section]
\newtheorem{Lem}[Thm]{Lemma}
\newtheorem{Cor}[Thm]{Corollary}
\newtheorem{thmA}{Theorem}
\newtheorem{propA}[thmA]{Proposition}
\theoremstyle{definition}
\newtheorem{Def}[Thm]{Definition}
\newtheorem{Qu}[Thm]{Question}
\theoremstyle{remark}
\newtheorem{Rem}[Thm]{Remark}
\DeclareMathOperator{\CAT}{CAT}
\numberwithin{equation}{section}
\newcommand{\id}{\mathds{1}}
\newcommand{\RR}{\mathbb{R}}
\newcommand{\ZZ}{\mathbb{Z}}
\newcommand{\NN}{\mathbb{N}}
\newcommand{\eps}{\varepsilon}
\DeclareMathOperator{\cl}{\mathbf{cl}}
\DeclareMathOperator{\sell}{s\ell}
\DeclareMathOperator{\scl}{\mathbf{scl}}
\DeclareMathOperator{\rl}{\mathbf{rl}}
\DeclareMathOperator{\srl}{\mathbf{srl}}
\DeclareMathOperator{\tl}{\mathbf{tl}}
\DeclareMathOperator{\stl}{\mathbf{stl}}
\DeclareMathOperator{\Pc}{\mathrm{Pc}}
\DeclareMathOperator{\core}{\mathrm{core}}
\definecolor{amethyst}{rgb}{0.6, 0.4, 0.8}
\definecolor{kellygreen}{rgb}{0.3, 0.73, 0.09}
\begin{document}

\title[srl in Coxeter groups]{Stable reflection length in Coxeter groups}

\author{Francesco Fournier-Facio}
\address{Department of Pure Mathematics and Mathematica Statistics, University of Cambridge, United Kingdom}
\email{ff373@cam.ac.uk}

\author{Marco Lotz}
\address{Heidelberg University,
Faculty of Mathematics and Computer Science, 
Im Neuenheimer Feld 205,
69120 Heidelberg}
\email{mlotz@mathi.uni-heidelberg.de}

\author{Timoth{\'e}e Marquis}
\address{Universit{\'e} Catholique de Louvain, IRMP, 1348 Louvain-la-Neuve, Belgium}
\email{timothee.marquis@uclouvain.be}

\thanks{FFF is supported by the Herchel Smith Postdoctoral Fellowship Fund. ML is supported in part by DFG Grant – 314838170, GRK 2297 MathCoRe. TM is a F.R.S.-FNRS Research associate, and is supported in part by the FWO and the F.R.S.-FNRS under the EOS programme (project ID 40007542).}

\subjclass[2020]{Primary 20F55, Secondary 20F65, 20J05}

\date{\today.}

\keywords{Reflection length, Coxeter groups, stable commutator length, strongly real elements.}

\begin{abstract}
We introduce stable reflection length in Coxeter groups, as a way to study the asymptotic behaviour of reflection length. This creates connections to other well-studied stable length functions in groups, namely stable commutator length and stable torsion length. As an application, we give a complete characterisation of elements whose reflection length is unbounded on powers.
\end{abstract}

\maketitle

\section*{Introduction}

A natural geometric function associated to a Coxeter group $W$ is its \emph{reflection length}, denoted $\rl$: the reflection length of an element $w \in W$ is the minimal number of hyperplane reflections needed to reflect the fundamental chamber $C$ onto the chamber $wC$ in the Coxeter complex of $W$.
This statistic is well understood for finite Coxeter groups \cite{Bessis2003} and was initially investigated for affine Coxeter groups, where it was shown to be bounded \cite{Mccam2011}. For Coxeter groups that are a direct product of finite and affine reflection groups, there are formulas for $\rl$ \cite{Carter1972}, \cite{Lewis2018}. Together with the fact that $\rl$ is additive under products, its study reduces to the case of Coxeter groups of irreducible indefinite type (Lemma \ref{lem reduction}).

The interest shifted to asymptotic behaviours with the result of Duszenko that in a Coxeter group of indefinite type, $\rl$ is unbounded \cite{Duszenko2011}. Since then, some work went into understanding to what extent this unboundedness could be witnessed by cyclic subgroups, in particular those generated by Coxeter elements. This was initially achieved by Drake and Peters for universal Coxeter groups \cite{Drake2021} and then by the second author for Coxeter groups with sufficiently large labels \cite{Lotz2024}.

Our main result gives a full characterisation for arbitrary elements in arbitrary Coxeter groups. This involves the notion of \emph{straight part} from \cite{marquis:long}. We refer the reader to Subsection \ref{ss straight} for the definition. For now let us just recall that an element $w$ is \emph{straight} if $\ell_S(w) = n \cdot \ell_S(w)$ for all $n \geq 1$, and point out that a straight element is equal to its own straight part.

\begin{thmA}
\label{intro thm main}
    Let $W$ be a Coxeter group and $w \in W$. Let $\Pc(w) = P_1 \times \cdots \times P_r$ be the decomposition of the parabolic closure of $w$ into irreducible components. Write $w = w_1 \cdots w_r$ with $w_i \in P_i$. Then exactly one of the following holds:
    \begin{enumerate}
        \item For each $i \in \{1, \ldots, r\}$ such that $P_i$ is of indefinite type, the straight part of $w_i$ is a product of two involutions.
        \item The reflection length $\rl(w^n)$ grows linearly, in particular it is unbounded.
    \end{enumerate}
\end{thmA}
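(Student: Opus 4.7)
The plan is to reduce to a dichotomy for straight elements in an irreducible Coxeter group of indefinite type, and then to exploit the stable reflection length machinery developed later in the paper. By additivity of $\rl$ under direct products (Lemma \ref{lem reduction}), $\rl(w^n) = \sum_i \rl(w_i^n)$, and the contributions from $P_i$ of finite or affine type are uniformly bounded by \cite{Carter1972, Lewis2018}. Hence (2) is equivalent to $\rl(w_i^n)$ growing linearly for some $i$ with $P_i$ indefinite, while (1) imposes the involution condition on the straight part of every such $w_i$. It therefore suffices to treat the case $W$ irreducible indefinite with $\Pc(w) = W$; invoking the properties of the straight part from Subsection \ref{ss straight}, one may further assume $w$ itself is straight and write $w = w_0$.

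Suppose first $w_0 = \sigma\tau$ with $\sigma,\tau$ involutions in $W$. Then $w_0^n$ lies in the dihedral subgroup $D := \langle \sigma, \tau \rangle$, and in $D$ is again a product of two involutions, say $w_0^n = \alpha_n\beta_n$. Every involution in a dihedral group is conjugate to one of its two standard generators; in particular, $\alpha_n$ and $\beta_n$ are $W$-conjugate to $\sigma$ or $\tau$. Since $\rl$ is conjugation-invariant, this yields $\rl(w_0^n) \leq 2\max(\rl(\sigma),\rl(\tau))$, uniformly in $n$, so (2) fails.

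Conversely, assume $w_0$ is straight and not a product of two involutions. The aim is to show $\srl(w_0) > 0$, which gives linear growth and hence (2). This is the heart of the argument and the main obstacle. My plan is to exploit the connections to stable commutator length $\scl$ and stable torsion length $\stl$ announced in the introduction: failure of strong reality should manifest as a positive lower bound on $\scl(w_0)$ or $\stl(w_0)$ via a suitable quasimorphism or homological obstruction, which combined with expected inequalities of the form $\srl \geq \scl$ (or $\srl \geq \stl/2$) completes the argument. The technical difficulty is to produce such a bound uniformly across all straight non--strongly real elements in any irreducible indefinite Coxeter group, since previous work \cite{Drake2021, Lotz2024} treats only Coxeter elements under restrictive hypotheses. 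Extending those constructions will likely proceed either by generalising Duszenko's gallery-counting argument \cite{Duszenko2011} to quantify the reflection length of $w_0^n$ along the axis of $w_0$ in the Coxeter complex, or by constructing an explicit quasimorphism on $W$ detecting the absence of an involution factorisation.
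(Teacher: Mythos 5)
Your reduction to the irreducible indefinite case with $\Pc(w)=W$, and your argument for the easy direction, are essentially sound: the dihedral computation showing that a product of two involutions has $\{\rl(w^n)\}_{n\geq 1}$ bounded is the same as the paper's Lemma \ref{lem involutions srl} (phrased via conjugacy classes of involutions in the infinite dihedral group rather than the explicit identities). One small caveat on the reduction: additivity (Lemma \ref{lem reduction}) applies to a direct product decomposition of $W$ itself, whereas $\Pc(w)$ is merely a parabolic subgroup of $W$; to pass from $\rl_W$ to $\rl_{\Pc(w)}$ you also need Dyer's result that reflection length restricts correctly to parabolic subgroups (Lemma \ref{lem srl parabolic}). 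Likewise, "we may assume $w$ is straight" needs Lemma \ref{lem straight part}(2) ($w^N=w_\infty^N$ for some $N$) together with Lemma \ref{lem length bounded powers}; this is routine but should be said.

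The genuine gap is the converse, which you yourself flag as "the heart of the argument": you give no mechanism for proving $\srl(w_0)>0$ when the straight part is not a product of two involutions, only a plan ("a suitable quasimorphism or homological obstruction", "generalising Duszenko", "a quasimorphism detecting the absence of an involution factorisation" --- the last of these is essentially a restatement of the goal). The paper's proof supplies two concrete ingredients that your sketch is missing. First, the pivot is \emph{chirality} (Definition \ref{def:chiral}): Theorem \ref{thm achiral straight conjugate involutions} shows, via the core splitting $w=aw_c^n$ and the properties in Lemma \ref{lem straight part}, that if $w^m$ is conjugate to $w^{-m}$ for some $m$ (achirality), then the core $w_c$ is conjugate to $w_c^{-1}$ by an involution, hence the straight part is a product of two involutions; the contrapositive converts your hypothesis into chirality of $w$. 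Second, for a chiral $w$ with $\Pc(w)=W$ in irreducible indefinite type, $w$ is a rank-one isometry of the Davis complex \cite[Proposition~4.5]{Caprace2009}, so $W$ is acylindrically hyperbolic with $w$ generalised loxodromic, and chirality yields a homogeneous quasimorphism positive on $w$; Bavard duality then gives $\scl(w)>0$ (Theorem \ref{thm scl positivity}), and the chain $2\scl\leq\stl\leq\srl$ (Lemma \ref{lem scl vs stl vs srl}) --- which is the inequality you correctly anticipated --- gives $\srl(w)>0$. Without the chirality characterisation and the rank-one/quasimorphism input, your proposal does not prove the hard implication, so as it stands it establishes only one half of the dichotomy.
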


In earlier works, the focus was on finding sufficient conditions on the Coxeter graph of a Coxeter group that would ensure the unboundedness of $\rl(w^n)$ for every Coxeter element $w$ \cite{Drake2021, Lotz2024}. We obtain a full combinatorial characterisation of when this holds.

\begin{thmA}[Theorem \ref{thm cliques}]
\label{intro thm combinatorial}
    Let $(W, S)$ be a Coxeter system of irreducible indefinite type with Coxeter graph $\Gamma$.
    \begin{enumerate}
        \item There exists a Coxeter element $w \in W$ such that $\{\rl(w^n)\}_{n \geq 1}$ is bounded, if and only if $\Gamma$ is bipartite.
        \item $\{ \rl(w^n) \}_{n \geq 1}$ is bounded for every Coxeter element $w \in W$, if and only if $\Gamma$ is a tree.
    \end{enumerate}
\end{thmA}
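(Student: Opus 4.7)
The plan is to deduce Theorem~\ref{intro thm combinatorial} from Theorem~\ref{intro thm main} by specializing to Coxeter elements. Let $w$ be a Coxeter element of $W$. Two standard facts apply: first, the parabolic closure is $\Pc(w) = W$ (so the decomposition given by Theorem~\ref{intro thm main} is trivial, with $r = 1$ and $w_1 = w$); and second, Coxeter elements of irreducible infinite Coxeter groups are straight. Consequently, Theorem~\ref{intro thm main} specializes to the assertion that $\{\rl(w^n)\}_{n \geq 1}$ is bounded if and only if $w$ is itself a product of two involutions.

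The heart of the proof is then a combinatorial characterization, which we would prove as a separate lemma: \emph{a Coxeter element $w \in W$ is a product of two involutions if and only if $w$ is $W$-conjugate to a bipartite Coxeter element}, meaning an element of the form $\bigl(\prod_{s \in A} s\bigr)\bigl(\prod_{s \in B} s\bigr)$ for some partition $S = A \sqcup B$ into two sets of pairwise commuting generators (forcing $\Gamma$ to be bipartite with bipartition $(A,B)$). The direction $(\Leftarrow)$ is immediate, since being a product of two involutions is conjugation-invariant. For $(\Rightarrow)$, starting from $w = t_1 t_2$ one exploits the identity $t_1 w t_1 = w^{-1}$, the description of involutions in Coxeter groups as products of pairwise commuting reflections, and the equality $\rl(w) = |S|$ valid for Coxeter elements of irreducible infinite Coxeter groups, to argue that the acyclic orientation of $\Gamma$ associated to $w$ is source-to-sink equivalent to a bipartite one.

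Granted this lemma, both assertions of Theorem~\ref{intro thm combinatorial} reduce to standard graph-theoretic facts. For~(1), a bipartite Coxeter element exists if and only if $\Gamma$ admits a bipartite acyclic orientation, if and only if $\Gamma$ is bipartite. For~(2), conjugacy classes of Coxeter elements correspond bijectively to source-to-sink equivalence classes of acyclic orientations of $\Gamma$ (by results of Shi and Eriksson), and there is a unique such class precisely when $\Gamma$ is a tree; hence every Coxeter element is conjugate to a bipartite one if and only if $\Gamma$ is a tree.

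The main obstacle is the $(\Rightarrow)$ direction of the combinatorial characterization. Naive bookkeeping with reflection length only yields $\rl(t_1)+\rl(t_2) \geq |S|$ with the correct parity modulo~$2$, and constraining the two commuting reflection sets to align with a bipartition of $\Gamma$ requires finer input, most likely from the geometric action of $w$ on the Davis complex or on the Tits cone, or from Hurwitz-type reshuffling of reduced reflection factorizations of $w$.
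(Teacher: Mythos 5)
Your first reduction is exactly the one the paper makes: since a Coxeter element $w$ satisfies $\Pc(w)=W$ and is straight (hence equals its straight part), Corollary \ref{cor main rank one} gives that $\{\rl(w^n)\}_{n\geq 1}$ is bounded if and only if $w$ is a product of two involutions, equivalently (Lemma \ref{lem involutions product conjugate}, plus achirality) if and only if $w$ is conjugate to $w^{-1}$. The gap is precisely at what you call the heart of the proof: the implication ``$w$ a product of two involutions $\Rightarrow$ $w$ is conjugate to a bipartite Coxeter element'' is never established -- you explicitly flag it as the main obstacle and only gesture at reflection-length bookkeeping, the Davis complex, or Hurwitz moves. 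None of these is the needed input (and the identity $\rl(w)=|S|$ you invoke for Coxeter elements of infinite groups is itself a nontrivial claim you do not justify). As written, both hard implications of the theorem -- ``bounded $\Rightarrow$ $\Gamma$ bipartite'' in (1) and ``bounded for all Coxeter elements $\Rightarrow$ $\Gamma$ a tree'' in (2) -- rest on this unproved half of your lemma, so the proposal is incomplete.

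For comparison, the paper sidesteps your lemma entirely by working with the weaker condition ``$w$ is conjugate to $w^{-1}$'' (note $w^{-1}$ is again a Coxeter element), which is all Corollary \ref{cor main rank one} requires; there is no need to land on a bipartite element. Concretely, in Theorem \ref{thm cliques} the easy directions are that a bipartite Coxeter element is visibly a product of two involutions, and that on a tree all Coxeter elements are conjugate; for the converses one passes to the right-angled cover (Lemma \ref{lem:elements:cover}), retracts onto the standard parabolic subgroup of a minimal (odd) cycle, and uses the ``curl'' invariant furnished by the classification of conjugacy classes of Coxeter elements via source-to-sink moves to show that on an odd cycle no Coxeter element is conjugate to its inverse, and on any cycle the cyclically oriented one is not (Lemma \ref{lem cycle}). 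In fact, the missing direction of your lemma can be extracted from the very classification you already cite for part (2): conjugacy of $w$ to $w^{-1}$ forces the flow difference of the associated acyclic orientation around every cycle of $\Gamma$ to vanish, which is impossible on an odd cycle and, when $\Gamma$ is bipartite, identifies exactly the source-to-sink class of the bipartite orientation. But that argument has to be carried out -- it replaces, rather than follows from, the geometric tools you propose, and some care is needed with the labels (the paper's reduction to the right-angled cover, or the general form of the Eriksson-type classification, is what makes the combinatorics apply verbatim).
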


We only state this in the case of irreducible indefinite type, but the general case reduces to this one (Corollary \ref{cor cliques general}). Note that this gives a wealth of examples of Coxeter groups of irreducible indefinite type with the property that $\{\rl(w^n)\}_{n \geq 1}$ is bounded for some Coxeter elements, and unbounded for others.

\medskip

A novelty in our approach lies in shifting the focus from reflection length to its stabilisation. We call this the \emph{stable reflection length}, denoted $\srl$. Then $\srl(w) > 0$ if and only if $\rl(w^n)$ grows linearly, and in particular is unbounded. This is analogous to other stable length functions that have a rich theory, most importantly \emph{stable commutator length} $\scl$ \cite{Calegari2009} and \emph{stable torsion length} $\stl$ \cite{Avery2023}. In fact, these quantities are intimately connected. For $\srl$ and $\stl$, this takes the form of a bi-Lipschitz equivalence (Lemma \ref{lem stl srl bilip}). For $\scl$, this is less direct, but there is still a strong connection in the generic case, which is a main step towards Theorem \ref{intro thm main}.

\begin{propA}[Corollary \ref{cor main rank one}]
\label{intro thm srl scl}
    Let $W$ be a Coxeter group of irreducible indefinite type. Then for all $w \in W$ with $\Pc(w) = W$, the following are equivalent.
    \begin{itemize}
        \item $\scl(w) = 0$;
        \item $\srl(w) = 0$;
        \item The straight part of $w$ is a product of two involutions.
    \end{itemize}
\end{propA}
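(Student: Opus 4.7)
The strategy is to combine Theorem~A, specialized to the rank-one setting $\Pc(w) = W$, with general properties of $\scl$ and Bavard duality. When $\Pc(w) = W$ is of irreducible indefinite type, the parabolic-component decomposition in Theorem~A has a single factor ($r = 1$, $P_1 = W$), so the dichotomy there collapses exactly to the equivalence between condition (c) and the failure of $\rl(w^n)$ to grow linearly, i.e.\ $\srl(w) = 0$. This immediately gives (b)~$\Longleftrightarrow$~(c), so only the bridge to (a) remains.

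For the implication (b)~$\Rightarrow$~(a), the plan is to exploit that every reflection $t \in W$ is an involution, hence $\scl(t) = 0$, and iterate the defect inequality $\scl(uv) \leq \scl(u) + \scl(v) + \tfrac{1}{2}$ along a shortest reflection factorization $w^n = t_1 \cdots t_k$: this yields $\scl(w^n) \leq (\rl(w^n) - 1)/2$, and dividing by $n$ and taking the limit gives $\scl(w) \leq \tfrac{1}{2}\srl(w)$. Alternatively, one may combine the bi-Lipschitz equivalence $\srl \asymp \stl$ (Lemma~\ref{lem stl srl bilip}) with the analogous inequality $\scl \leq \tfrac{1}{2}\stl$, proved by the same defect argument applied to torsion factorizations.

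The crux is the converse (a)~$\Rightarrow$~(c). By contraposition, assume the straight part $s(w)$ is not a product of two involutions; the goal is to produce a homogeneous quasimorphism $\varphi : W \to \RR$ with $\varphi(w) \neq 0$, which by Bavard duality forces $\scl(w) > 0$. The most economical route is to inspect the proof of Theorem~A in the rank-one case: the argument showing $\rl(w^n)$ grows linearly should already exhibit a quasimorphism $\varphi$ that is bounded on reflections and satisfies $|\varphi(w^n)| \geq cn$, so that its homogenization satisfies $\varphi(w) \neq 0$. If the quasimorphism is not explicit from that proof, one constructs it directly: the hypothesis $\Pc(w) = W$ of irreducible indefinite type should ensure that $s(w)$ acts on the Davis complex of $W$ as a rank-one (contracting) isometry, and the failure of $s(w)$ to be a product of two involutions is equivalent to $s(w)$ not being conjugate to its inverse by an involution; this asymmetry is exactly what drives a Bestvina--Fujiwara-type counting quasimorphism along the axis of $s(w)$ to be nonzero on $w$.

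The main obstacle is precisely this last construction: one must marry the CAT(0) rank-one machinery, which relies on the irreducible indefinite hypothesis, with the combinatorics of reflections in $W$ to secure both boundedness of $\varphi$ on reflections and its nonvanishing on $w$. A subsidiary difficulty is transferring the nonvanishing from the straight part $s(w)$ back to $w$ itself, which should follow from the relationship developed in Subsection~\ref{ss straight}, but must be handled with care since $\scl$ is only quasi-invariant under the modifications relating $w$ to $s(w)$.
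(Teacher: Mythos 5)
There is a genuine gap, and it is located exactly at the step you flag as ``the crux.'' First, a structural problem: you take the equivalence (b)$\Leftrightarrow$(c) from Theorem~\ref{intro thm main}, but in the paper the logical order is the reverse --- Theorem~\ref{intro thm main} is deduced (via Lemmas \ref{lem srl parabolic} and \ref{lem reduction}) from precisely the statement you are asked to prove, so invoking it here is circular, and ``inspecting the proof of Theorem~A'' yields nothing beyond a reduction back to this proposition. Your (b)$\Rightarrow$(a) step is fine and is essentially the paper's Lemma \ref{lem scl vs stl vs srl} ($2\scl\leq\stl\leq\srl$, or equivalently your defect-subadditivity argument), and (c)$\Rightarrow$(b) can be repaired using the core splitting, Lemma \ref{lem straight part}(2), Lemma \ref{lem involutions srl} and Lemma \ref{lem length bounded powers}, though you do not supply this either.

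The real missing idea is in (a)$\Rightarrow$(c). The quasimorphism machinery you describe (rank-one isometry of the Davis complex, Bestvina--Fujiwara counting quasimorphisms, Bavard duality) detects exactly \emph{chirality}: it gives $\scl(w)>0$ if and only if no power $w^m$ is conjugate to $w^{-m}$ by \emph{any} element of $W$ (this is the paper's Theorem \ref{thm scl positivity}). But your contrapositive hypothesis is only that the straight part is not a product of two involutions, i.e.\ by Lemma \ref{lem involutions product conjugate} not conjugate to its inverse \emph{by an involution}. A priori $w$ could still be achiral --- some $w^m$ conjugate to $w^{-m}$ by a non-involution --- in which case $\scl(w)=0$ (Lemma \ref{Lem: achiral bounded scl}) and no homogeneous quasimorphism can be nonzero on $w$; your axis quasimorphism would simply vanish. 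Closing this gap is the main new content of the paper, Theorem \ref{thm achiral straight conjugate involutions}: if $w$ is achiral then the core $w_c$ is conjugate to $w_c^{-1}$ by an involution, proved by a careful algebraic analysis of the core splitting $w=aw_c^n$, the normaliser of $P_w^{\max}=W_I$, and the minimal-length representative $\overline{x}$ of $xW_I$, forcing $\overline{x}^2=\id$. Nothing in your plan produces this upgrade from ``conjugate by something'' to ``conjugate by an involution,'' and without it the asserted equivalence with the strong reality of the straight part does not follow; the ``subsidiary difficulty'' of transferring between $w$ and its straight part is, by contrast, easy (a power of $w$ equals a power of $w_c$, and $\scl$ is homogeneous and conjugation-invariant).
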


Therefore one can interpret $\srl$ as a tool that creates a bridge between the rich literature on $\scl$ and the geometry of reflection length. We hope that this will be useful beyond the problem at hand. Below, we propose two motivating questions for future research (Subsection \ref{ss questions}).

\medskip

\paragraph{\bf{Outline.}} In Section \ref{sec. prelim} we go over some preliminaries on Coxeter groups and length functions. In Section \ref{sec. scl} we reduce the positivity of $\scl$ to an algebraic property: \emph{chirality}. In Section \ref{sec. main} we characterise this in terms of products of involutions, proving Theorem \ref{intro thm main} and Proposition \ref{intro thm srl scl}. Finally, in Section \ref{sec. combinatorial} we focus on Coxeter elements, proving Theorem \ref{intro thm combinatorial}.

\medskip

\paragraph{\bf{Acknowledgements.}} 
The first two authors thank Raphael Appenzeller, Lvzhou Chen, Petra Schwer and Henry Wilton for useful discussions.

\section{Preliminaries}
\label{sec. prelim}

\subsection{Coxeter groups}

The basic theory of Coxeter groups is treated in detail in e.g. \cite{Davis2012}.

\begin{Def}
Let $\Gamma_0 = (S, E)$ be a finite graph with vertex set $S= \{s_1, \dots, s_n\}$, edge set $E= \{\{u,v\}\subseteq S \mid u\neq v\}$ and an edge-labelling function $m:E\to \mathbb{N}_{\geq 2}\cup \{\infty\}$. We abbreviate $m({s_i,s_j})$ with $m_{ij}$. The corresponding \emph{Coxeter group} $W$ is given by the presentation
\begin{align*}
    W = \langle S \mid \; & \; s_i^2 = \id \;\; \forall i \in \{1, \ldots, n\}, \\
    &(s_is_j)^{m_{ij}}= \id \;\forall\, i\neq j\in \{1,\dots, n\} \text{ with } m_{ij} < \infty \rangle.
\end{align*}
The pair $(W,S)$ is called a \emph{Coxeter system}. The graph $\Gamma$ obtained from $\Gamma_0$ by omitting edges with label $2$ is called the \emph{Coxeter graph} of $(W,S)$.

We denote by $\ell_S(w)$ the minimal length of a word in the generators $S$ representing an element $w\in W$.

Given a subset $I \subset S$, the induced subgraph of $\Gamma_0$ with vertex set $I$ defines a Coxeter system $(P, I)$, and $P$ is isomorphic to the subgroup $W_I$ of $W$ generated by $I$. A subgroup of $W$ is called \emph{parabolic} if it is conjugate to $W_I$ for some $I \subset S$. For $w \in W$ there exists a smallest parabolic subgroup containing $w$, called the \emph{parabolic closure} of $w$ and denoted $\Pc(w)$.

The group $W$ is called \emph{irreducible} if $\Gamma$ is connected. This is equivalent to $W$ not decomposing as a direct product of two Coxeter groups defined on proper subgraphs. The irreducible Coxeter systems split into three families: \emph{finite} type (when the group is finite), \emph{affine} type (when the group is infinite and virtually abelian), and \emph{indefinite} type (in all other cases). If $W$ is finite (not necessarily irreducible) it is called \emph{spherical}.
\end{Def}

\subsection{Straight elements}
\label{ss straight}

Let $W$ be of irreducible indefinite type. An element $w \in W$ is called \emph{straight} if $\ell_S(w^n) = n \cdot \ell_S(w)$. The geometry of straight elements is especially well-behaved, so it is useful to extract straight elements out of arbitrary elements.

Suppose that $\Pc(w) = W$. By \cite[Theorem~9.6]{marquis:long}, there is a largest spherical parabolic subgroup $P_w^{\max}$ of $W$ normalised by $w$. As in \cite[Definition~9.21]{marquis:long}, we associate to $w$ its \emph{core} $w_c=\core(w)$, so that $w$ has a unique decomposition of the form $w=aw_c^n$ with $n \geq 1$ and $a \in P_w^{\max}$. This decomposition is called the \emph{core splitting} of $w$. The element $w_{\infty}:=w_c^n$ is then called the \emph{straight part} of $w$ --- this terminology is motivated by the fact that if $w$ is straight then $w=w_{\infty}$, see \cite[Remark~9.25]{marquis:long}. More generally, $w$ is straight if and only if $w=w_{\infty}$ and $w$ is cyclically reduced, see \cite[Corollary~8.11]{marquis:long}. See also \cite[Lemma~8.9]{marquis:long} for a more geometric definition of the straight part.

We collect here a few properties of cores and core splittings from \cite{marquis:long}.

\begin{Lem}[\cite{marquis:long}]
\label{lem straight part}
Let $w\in W$ with $\Pc(w)=W$. Then:
\begin{enumerate}
    \item $\Pc(w_c)=\Pc(w^m)=\Pc(aw)=W$ and $P_w^{\max}=P_{w_c}^{\max}=P_{w^m}^{\max}=P_{aw}^{\max}$ for all $m\neq 0$ and $a\in P_w^{\max}$.
    \item There are some $n, N \geq 1$ such that $w^N=w_c^{nN}$.
    \item $\core(w^m)=\core(w)$ for all $m \geq 1$, and $\core(w^{-1})=\core(w)^{-1}$.
    \item Write $P_w^{\max}=vW_Iv^{-1}$ for some spherical subset $I\subseteq S$ and some $v\in W$ of minimal length in $vW_I$. Then $\core(v^{-1} wv)=v^{-1}\core(w)v$.
    \item If $v$ commutes with $w$, then $v=aw_c^n$ for some $a\in P_w^{\max}$ and $n\in\ZZ$.
\end{enumerate}
\end{Lem}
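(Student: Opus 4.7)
The plan is to exploit the core splitting $w=aw_c^n$ from \cite[Definition~9.21]{marquis:long} together with the finiteness of the spherical parabolic $P_w^{\max}$, and then to use uniqueness of the core splitting to propagate structural properties through the list. I would take for granted from \cite{marquis:long} two structural inputs underlying the construction: that $w_c$ itself normalises $P_w^{\max}$ (so conjugation by $w_c$ preserves $P_w^{\max}$), and that the decomposition $w=aw_c^n$ with $a\in P_w^{\max}$ and $n\geq 1$ is uniquely characterised by some intrinsic property of $w_c$.

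I would begin with~(2). Setting $a_i := w_c^{in} a w_c^{-in}\in P_w^{\max}$, an easy induction on $k$ gives
\[
w^k = a_0 a_1 \cdots a_{k-1}\cdot w_c^{nk} = b_k w_c^{nk}, \qquad b_k\in P_w^{\max}.
\]
Since $P_w^{\max}$ is spherical and hence finite, the sequence $(b_k)_{k\geq 1}$ lies in a finite group, and a pigeonhole argument inside the finite semidirect product $P_w^{\max}\rtimes\langle c_{w_c^n}\rangle$ produces $N\geq 1$ with $b_N=\id$, which proves~(2). Items~(3) and most of~(1) then fall out quickly: the identity $w^m=b_m w_c^{nm}$ is a valid core splitting of $w^m$, so by uniqueness $\core(w^m)=w_c$; while $w^{-1} = (w_c^{-n}a^{-1}w_c^n)\,w_c^{-n}$ is a core splitting of $w^{-1}$ with core $w_c^{-1}$. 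For~(1), the invariance of $\Pc$ under nonzero powers is standard, and applying the same induction to $aw=a^2 w_c^n$ produces some $M$ with $(aw)^M$ a pure power of $w_c$, forcing $\Pc(aw)=\Pc(w_c)=W$. The equalities of the largest spherical parabolics reduce to showing that $w$, $w^m$ and $aw$ all share the same ``largest spherical parabolic normalised by them'', which I would deduce from~(2) together with the explicit characterisation in \cite[Theorem~9.6]{marquis:long}. Item~(4) is pure equivariance: writing $a=va'v^{-1}$ with $a'\in W_I$, one computes $v^{-1}wv = a'(v^{-1}w_c v)^n$, which is a core splitting of $v^{-1}wv$ inside $P_{v^{-1}wv}^{\max}=W_I$, and uniqueness forces $\core(v^{-1}wv) = v^{-1}w_c v$.

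The step I expect to be the main obstacle is~(5), which is a rigidity statement for centralisers. The plan is to observe that any $v$ commuting with $w$ must normalise $\Pc(w)=W$ (trivially) and also $P_w^{\max}$, since the latter is characterised as the largest spherical parabolic normalised by $w$, a property intrinsic to $w$ and hence preserved by $v$. One would then analyse a core splitting of $v$ and use commutation with $w$ together with uniqueness of the core to force the straight part of $v$ to be a power of $w_c$, so that $v\in P_w^{\max}\cdot\langle w_c\rangle$. The delicate point is ruling out ``sporadic'' centralising elements outside this coset; this likely requires the more refined centraliser analysis of straight elements in \cite{marquis:long}, showing in particular that the centraliser of $w_c$ in $W$ is essentially cyclic modulo the finite piece $P_w^{\max}$.
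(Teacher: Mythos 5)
The paper's own ``proof'' of this lemma is essentially a pointer to the source: (1) is \cite[Lemma~9.16(3)]{marquis:long}, the first part of (3) is \cite[Lemma~9.23]{marquis:long} and the second part is the definition of the core, (4) is the first assertion of \cite[Lemma~9.26]{marquis:long}, and (5) is \cite[Proposition~9.29(1)]{marquis:long}; only (2) is obtained on the spot from the core splitting, by exactly the kind of argument you give. Your treatment of (2) is correct: since $P_w^{\max}$ is spherical, hence finite, and is normalised by $w_c$, the element $(a,\phi)$ has finite order in $P_w^{\max}\rtimes\langle\phi\rangle$, so some power of $w$ is a pure power of $w_c$. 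Your sketch of (1) can also be pushed through, granted the invariance of $\Pc$ under nonzero powers and the characterisation of $P^{\max}$ as the largest spherical parabolic normalised by the element --- both of which are themselves results you would have to import from \cite{marquis:long} rather than call standard.

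The genuine gaps are in (3), (4) and (5). For (3) and (4) everything rests on an unspecified ``intrinsic property of $w_c$'' plus a recognition principle of the form: if $w'=bu^k$ with $b\in P_{w'}^{\max}$, $k\geq 1$ and $u$ ``core-like'', then $u=\core(w')$. The uniqueness available in \cite{marquis:long} is uniqueness of the decomposition of $w'$ with respect to its \emph{own} core, not a criterion for recognising the core from an arbitrary factorisation of this shape; showing that $w_c$ (resp.\ $w_c^{-1}$, resp.\ $v^{-1}w_cv$) really is the core of $w^m$ (resp.\ $w^{-1}$, resp.\ $v^{-1}wv$) is precisely the content of the cited Lemmas~9.23 and~9.26 and does not come for free. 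The clearest symptom is in (4): your computation never uses the hypothesis that $v$ has minimal length in $vW_I$, so it would establish conjugation-equivariance of the core for an arbitrary conjugator carrying $P_w^{\max}$ to $W_I$ --- a strictly stronger statement than the one the reference (and the paper) is careful to claim, which strongly suggests the uniqueness principle you are assuming is not actually available: the core is defined via combinatorial data attached to the standard generating set, so equivariance is a statement requiring proof and hypotheses. Finally, (5) is not proved at all; as you acknowledge, it needs the centraliser analysis of \cite{marquis:long} (it is exactly Proposition~9.29(1) there), so on this item your proposal reduces to the same citation the paper makes, only presented as an unresolved step rather than a proof.
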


\begin{proof}
    (1) follows from \cite[Lemma~9.16(3)]{marquis:long} and (2) from the core splitting $w=aw_c^n$ of $w$. The first part of (3) follows from \cite[Lemma~9.23]{marquis:long}, and its second part from the definition of the core (\cite[Definition~9.21]{marquis:long}). Statement (4) follows from the first assertion of \cite[Lemma~9.26]{marquis:long} (\cite[Lemma~9.26]{marquis:long} is actually stated for $w$ cyclically reduced, but this assumption is not used for the first assertion of that lemma). Finally, (5) is \cite[Proposition~9.29(1)]{marquis:long}.
\end{proof}

\subsection{Length functions}

\begin{Def}\label{Def: length function}
Let $G$ be a group and $Y \subset G$ a symmetric subset. The corresponding \emph{length function} $\ell_Y$ is defined as 
\[
\ell_Y: G\to\mathbb{N} \cup \{ \infty \} \, ; \qquad g\mapsto \min \{n\in \NN\mid g\in Y^n\}
\]
with $Y^n= \{ y_1\cdots y_n\in G\mid y_i\in Y\}$. The identity element $\id$ has length zero. Elements in $G \setminus  \langle Y\rangle $ have length $\infty$.
\end{Def}


In this paper, we will mostly be concerned with a \emph{stabilisation} of the previous notion \cite{wlength}.

\begin{Def}\label{Def: stable length function}
    Let $Y \subset G$ be a symmetric subset, and let $\ell_Y$ be the corresponding length function. The \emph{stable length function} $\sell_Y$ is defined as
    \[\sell_Y(g) \coloneqq \lim\limits_{n \to \infty} \frac{\ell_Y(g^n)}{n},\]
    when $g \in \langle Y \rangle$. If there exists $k \geq 1$ such that $g^k \in \langle Y \rangle$, we set $\sell_Y(g) \coloneqq \frac{\sell_Y(g^k)}{k}$. Otherwise, we set $\sell_Y(g) \coloneqq \infty$. 
\end{Def}

The limit in the definition above exists by Fakete's Lemma. Moreover, for $g \in \langle Y \rangle$ and $k \geq 1$, we have $\sell_Y(g^k) = k \cdot \sell_Y(g)$, so the extension of the domain of $\sell_Y$ is well-defined. We record two general facts.

\begin{Lem}[{\cite[Lemma~2.2]{Avery2023}}]
\label{Lem: length and stable length under homomorphism}
    Let $G, H$ be two groups with conjugacy-invariant symmetric subsets $Y\subseteq G$ and $Z\subseteq H$. Suppose $\varphi \colon G\to H$ is a group homomorphism with $\varphi(Y)\subseteq Z$. Then 
    \[
        \ell_Z(\varphi(g))\leq \ell_Y(g)\; \;\text{and}\;\; \sell_Z(\varphi(g))\leq \sell_Y(g)
    \]
    for all $g\in \langle Y \rangle$.
\end{Lem}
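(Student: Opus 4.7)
The statement is a routine functoriality property of length and stable length, and the plan is to verify both inequalities directly from the definitions, without invoking the conjugacy-invariance hypothesis (which appears to be superfluous for this particular lemma, and is probably recorded because subsequent results in \cite{Avery2023} use it).

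For the first inequality, I would start with $g \in \langle Y \rangle$ and let $n = \ell_Y(g)$, which is finite by assumption. Pick a minimal expression $g = y_1 \cdots y_n$ with $y_i \in Y$. Applying the homomorphism $\varphi$ gives $\varphi(g) = \varphi(y_1) \cdots \varphi(y_n)$, and each $\varphi(y_i)$ lies in $Z$ by the hypothesis $\varphi(Y) \subseteq Z$. Thus $\varphi(g) \in Z^n$, which by Definition \ref{Def: length function} yields $\ell_Z(\varphi(g)) \leq n = \ell_Y(g)$. In particular $\varphi(g) \in \langle Z \rangle$, so the stable length $\sell_Z(\varphi(g))$ is given by the limit formula rather than by the extension clause of Definition \ref{Def: stable length function}.

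For the second inequality, I would apply the first inequality to the powers $g^n$, using that $\varphi(g^n) = \varphi(g)^n$ since $\varphi$ is a homomorphism. This gives $\ell_Z(\varphi(g)^n) \leq \ell_Y(g^n)$ for every $n \geq 1$. Dividing by $n$ and passing to the limit (which exists on both sides by Fekete's lemma, as noted after Definition \ref{Def: stable length function}) yields $\sell_Z(\varphi(g)) \leq \sell_Y(g)$. The only subtle point worth flagging is the need to check that $\varphi(g) \in \langle Z \rangle$ so that we are in the ``limit'' case of the definition of $\sell_Z$; but this was already observed in the proof of the first inequality. There is no real obstacle here — the lemma is essentially a bookkeeping statement, and the only thing to watch is the correct interpretation of $\sell$ when its argument is or is not a product of elements of the generating set.
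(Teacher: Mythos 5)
Your proof is correct and is the standard argument: push a minimal $Y$-expression through $\varphi$ to get the first inequality, then apply it to powers and pass to the limit for the stable version. The paper itself gives no proof, citing \cite[Lemma~2.2]{Avery2023} instead, and your observation that conjugacy-invariance of $Y$ and $Z$ is not actually needed for this particular inequality is accurate.
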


\begin{Lem}
\label{lem length bounded powers}
    Let $g \in \langle Y \rangle$, and suppose that there exists $N \geq 1$ such that $\{ \ell_Y(g^{Nk}) \}_{k \geq 1}$ is bounded. Then $\{ \ell_Y(g^n) \}_{n \geq 1}$ is bounded.
\end{Lem}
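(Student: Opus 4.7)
The plan is to reduce an arbitrary power $g^n$ to a bounded power $g^{Nk}$ via Euclidean division, using the fact that every length function $\ell_Y$ satisfies the triangle inequality $\ell_Y(ab) \leq \ell_Y(a) + \ell_Y(b)$, directly from Definition~\ref{Def: length function}.

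Concretely, let $C \geq 0$ be such that $\ell_Y(g^{Nk}) \leq C$ for all $k \geq 1$. Given any $n \geq 1$, write $n = Nk + r$ with $k \geq 0$ and $0 \leq r < N$ by Euclidean division. Then $g^n = g^{Nk} \cdot g^r$, so the triangle inequality gives
\[
\ell_Y(g^n) \leq \ell_Y(g^{Nk}) + \ell_Y(g^r).
\]
The second term is bounded above by $r \cdot \ell_Y(g) \leq N \cdot \ell_Y(g)$, which is finite since $g \in \langle Y \rangle$; alternatively, one may just take the maximum of $\ell_Y(g^r)$ over the finite set $r \in \{0, 1, \ldots, N-1\}$ (with the convention $\ell_Y(\id)=0$). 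The first term requires a mild care when $k = 0$, i.e. when $n < N$: here $g^n = g^r$ is already covered by the finite-maximum bound, so no appeal to the hypothesis is needed. For $k \geq 1$, the hypothesis gives $\ell_Y(g^{Nk}) \leq C$. In either case,
\[
\ell_Y(g^n) \leq C + N \cdot \ell_Y(g),
\]
uniformly in $n$, proving boundedness.

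There is no real obstacle here; the statement is essentially a pigeonhole observation combined with subadditivity of $\ell_Y$. The only point worth stating carefully is why the residual term $\ell_Y(g^r)$ is finite, which uses the standing hypothesis $g \in \langle Y \rangle$.
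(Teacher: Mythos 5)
Your proof is correct and follows essentially the same route as the paper: Euclidean division $n = Nk + r$, subadditivity of $\ell_Y$, the hypothesis for the $g^{Nk}$ factor, and a finite maximum (or $N\cdot\ell_Y(g)$) for the remainder $g^r$. The extra care you take with the case $k=0$ is fine but not needed, since the remainder bound already covers those $n$.
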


\begin{proof}
    Writing $n = kN + r$ for $r < N$ we get
    \[\ell_Y(g^n) \leq \ell_Y(g^{kN}) + \ell_Y(g^r) \leq \sup\limits_{k \geq 1} \ell_Y(g^{kN}) + \max\limits_{0 \leq i < N} \ell_Y(g^i).\]
    The first term is bounded by assumption, and the second term is bounded being a maximum over a finite set of finite values.
\end{proof}

The next two definitions are important examples of (stable) length functions.

\begin{Def}
    \label{Def: stable commutator length}
    Let $G$ be a group and let $C\subseteq G$ be the set of commutators in $G$. The corresponding length function is called \emph{commutator length} and denoted $\cl$; its stabilisation is called \emph{stable commutator length} and denoted $\scl$.
\end{Def}

Computing $\cl$ over free groups is an NP-complete problem \cite{heuer:np}. On the other hand, there is an algorithm for computing $\scl$ over free groups \cite{rationality}, which is even implemented in practice \cite{scallop}. In general, the theory of $\scl$ is much richer than that of $\cl$: we refer the reader to Calegari's book \cite{Calegari2009}, or the surveys \cite{calegari:survey, heuer:survey}.

\begin{Def}
    \label{Def: stable torsion length}
    Let $G$ be a group and let $T$ be the set of all torsion elements in $G$. The corresponding length function is called \emph{torsion length} and denoted $\tl$; its stabilisation is called \emph{stable torsion length} and denoted $\stl$.
\end{Def}

This latter notion was mainly studied by Avery and Chen \cite{Avery2023}, who proved several results parallel to the most celebrated ones on $\scl$. For instance, there is an algorithm for computing $\stl$ over free products of finite groups.

Now, we move to the most important length function in this paper, which is defined specifically for Coxeter groups.

\begin{Def}
    \label{Def: stable reflection length}
    Let $(W,S)$ be a Coxeter system. The conjugates of the standard generators in $S$ are called \emph{reflections}. The set of reflections $R$ generates $W$. The corresponding length function is called \emph{reflection length} and denoted $\rl$; its stabilisation is called \emph{stable reflection length} and denoted $\srl$.
\end{Def}

\begin{Rem}
    Although this definition really only makes sense for Coxeter groups, the reflection length coincides with the \emph{cancellation length} with respect to the finite normal generating set $S$ \cite{Dyer2001}. For the general framework of cancellation length on groups, and its asymptotic properties, we refer the reader to \cite{cancellation}.
\end{Rem}

It is easy to see that all of these functions are additive under direct products, this is \cite[Proposition~1.2]{Mccam2011} for $\rl$, is established similarly for $\tl$ and $\cl$, and implies the same for the stable versions.

\begin{Lem}
\label{lem products}
    Let $W_1, W_2$ be Coxeter groups. Then
    \begin{align*}
        \rl_{W_1 \times W_2}(w_1, w_2) &= \rl_{W_1}(w_1) + \rl_{W_2}(w_2), \\
        \srl_{W_1 \times W_2}(w_1, w_2) &= \srl_{W_1}(w_1) + \srl_{W_2}(w_2).
    \end{align*}
\end{Lem}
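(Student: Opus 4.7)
The plan is to reduce the stable version to the non-stable one, and to prove the non-stable one directly from the observation that reflections of $W_1\times W_2$ split cleanly according to the two factors. Concretely, if $S_i$ is a Coxeter generating set for $W_i$, then $S\coloneqq (S_1\times\{\id\})\cup(\{\id\}\times S_2)$ is one for $W=W_1\times W_2$, and since conjugation in $W$ preserves each factor, the set of reflections of $W$ is exactly $(R_1\times\{\id\})\cup(\{\id\}\times R_2)$, where $R_i$ denotes the reflections of $W_i$.

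From this, the inequality $\rl_W(w_1,w_2)\leq \rl_{W_1}(w_1)+\rl_{W_2}(w_2)$ is immediate by concatenating optimal reflection factorisations in each coordinate. For the reverse inequality, given a factorisation $(w_1,w_2)=t_1\cdots t_k$ into reflections of $W$, separate the $t_j$'s into those lying in $R_1\times\{\id\}$ and those in $\{\id\}\times R_2$; since the two subgroups commute, one may reorder the product without changing it, producing reflection factorisations of $w_1$ and $w_2$ whose total length is $k$. This is exactly the argument of \cite[Proposition~1.2]{Mccam2011}, which I would simply cite.

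For the stable statement, apply the first equality to powers: $\rl_W((w_1,w_2)^n)=\rl_{W_1}(w_1^n)+\rl_{W_2}(w_2^n)$. Dividing by $n$ and letting $n\to\infty$, the limit on the left is $\srl_W(w_1,w_2)$ and the limits on the right are $\srl_{W_1}(w_1)$ and $\srl_{W_2}(w_2)$, all of which exist by Fekete's lemma as discussed after Definition~\ref{Def: stable length function}. Since reflection length of any element of a Coxeter group is finite, $\langle R\rangle=W$ and the extended definition of $\sell$ is not needed here. There is no serious obstacle: the main point is just the clean splitting of reflections across a direct product, and the rest is a routine limit argument.
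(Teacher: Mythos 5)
Your proposal is correct and follows essentially the same route as the paper, which justifies the unstable equality by \cite[Proposition~1.2]{Mccam2011} (the splitting-of-reflections argument you spell out) and deduces the stable version by applying it to powers and passing to the limit. Your additional remarks — that reflections of the product are exactly $(R_1\times\{\id\})\cup(\{\id\}\times R_2)$ and that the extended definition of the stable length is not needed since $\langle R\rangle=W$ — are accurate and fill in details the paper leaves implicit.
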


Moreover, for Coxeter groups of finite and affine type, formulas for $\rl$ are known (see \cite{Carter1972} and \cite{Lewis2018}). In particular, $\rl$ is bounded on these groups, and therefore $\srl$ vanishes. We deduce:

\begin{Lem}
\label{lem reduction}
    Let $W$ be a Coxeter group, which we decompose as $W_0 \times W_1 \times \cdots \times W_r$, where $W_0$ is the product of its finite and affine components, and $W_1, \ldots, W_r$ are its components of indefinite type. Let $w \in W$, written as $w = w_0 w_1 \cdots w_r$ accordingly. Then $\srl_W(w) > 0$ if and only if $\srl_{W_i}(w_i) > 0$ for some $i \in \{1, \ldots, r\}$, and $\{\rl_W(w^n)\}_{n \in \geq 1}$ is unbounded if and only if $\{\rl_{W_i}(w_i^n)\}_{n \geq 1}$ is unbounded for some $i \in \{1, \ldots, r\}$.
\end{Lem}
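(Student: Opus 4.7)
The plan is essentially a bookkeeping argument combining Lemma \ref{lem products} with the boundedness of $\rl$ on Coxeter groups of finite and affine type, recalled immediately before the statement. The main idea is that the $W_0$ component contributes nothing to either the stable reflection length or to the unboundedness question, so everything reduces to the indefinite components $W_1, \ldots, W_r$.

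First, I would iterate Lemma \ref{lem products} across all $r+1$ factors of the decomposition to obtain
\[\rl_W(w^n) = \rl_{W_0}(w_0^n) + \sum_{i=1}^{r} \rl_{W_i}(w_i^n) \quad\text{and}\quad \srl_W(w) = \srl_{W_0}(w_0) + \sum_{i=1}^{r} \srl_{W_i}(w_i).\]
Next, since $W_0$ is a direct product of finitely many finite and affine Coxeter groups, the formulas from \cite{Carter1972, Lewis2018} together with \cite{Mccam2011} show that $\rl$ is bounded on each irreducible factor of $W_0$; applying Lemma \ref{lem products} once more gives that $\rl_{W_0}$ is bounded on all of $W_0$. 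In particular, $\{\rl_{W_0}(w_0^n)\}_{n \geq 1}$ is bounded and $\srl_{W_0}(w_0) = 0$.

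Both equivalences follow immediately. For the stable version, $\srl_W(w) = \sum_{i=1}^{r} \srl_{W_i}(w_i)$ is a sum of non-negative reals, hence is positive iff some $\srl_{W_i}(w_i)$ is positive. For the unboundedness claim, if every $\{\rl_{W_i}(w_i^n)\}_{n \geq 1}$ is bounded then so is their finite sum, and adding the bounded term $\rl_{W_0}(w_0^n)$ keeps $\{\rl_W(w^n)\}_{n \geq 1}$ bounded; conversely, each $\rl_{W_i}(w_i^n)$ is dominated by $\rl_W(w^n)$ (since all summands are non-negative), so boundedness of $\rl_W(w^n)$ forces boundedness of every $\rl_{W_i}(w_i^n)$. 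I do not expect any real obstacle here, as the statement is a direct consequence of additivity together with known boundedness results.
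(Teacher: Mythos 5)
Your proposal is correct and follows exactly the paper's (implicit) argument: the paper deduces the lemma directly from the additivity in Lemma \ref{lem products} together with the boundedness of $\rl$ on finite and affine type factors, which is precisely what you spell out. No gaps.
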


An additional useful fact about $\rl$ is that its restriction to a parabolic subgroup coincides with the reflection length of that subgroup \cite[Corollary 1.4]{Dyer2001}. This implies the same fact about $\srl$.

\begin{Lem}
\label{lem srl parabolic}
    Let $W' < W$ be a parabolic subgroup. Then for all $w \in W'$, $\rl_W(w) = \rl_{W'}(w)$ and hence $\srl_W(w) = \srl_{W'}(w)$.
\end{Lem}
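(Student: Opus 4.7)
The plan is to invoke the cited result of Dyer directly for the non-stable statement, and then deduce the stable version by noting that powers of elements in $W'$ remain in $W'$.

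For the first equality $\rl_W(w)=\rl_{W'}(w)$, one inclusion is immediate: a parabolic subgroup $W'\leq W$ is itself a Coxeter group whose reflections are precisely the reflections of $W$ lying in $W'$, so any expression of $w$ as a product of reflections of $W'$ is a fortiori an expression as a product of reflections of $W$, giving $\rl_W(w)\leq \rl_{W'}(w)$. The reverse inequality is exactly the content of \cite[Corollary 1.4]{Dyer2001}: if $w\in W'$ is written as a product of $k$ reflections in $W$, then one may rewrite it as a product of $k$ reflections all belonging to $W'$.

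For the stable version, since $W'$ is a subgroup we have $w^n\in W'$ for every $n\geq 1$. Applying the first part to each $w^n$ gives $\rl_W(w^n)=\rl_{W'}(w^n)$ for all $n$, so dividing by $n$ and passing to the limit yields $\srl_W(w)=\srl_{W'}(w)$ directly from Definition \ref{Def: stable length function}.

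There is essentially no obstacle here beyond quoting Dyer's result; the entire content of the lemma is packaging that statement and observing it interacts well with the stabilisation procedure. The proof fits in two or three lines in the final write-up.
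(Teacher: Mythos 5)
Your proof is correct and matches the paper's treatment: the paper likewise derives the equality of reflection lengths directly from \cite[Corollary 1.4]{Dyer2001} and obtains the stable version by applying this to all powers of $w$, which stay in $W'$. Nothing further is needed.
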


\begin{Rem}
    This is a very useful property that will play an important role in the proof of Theorem \ref{intro thm main}. The situation for $\scl$ is different (except in the special case that $W'$ is a retract), in fact it is an open question whether $\scl_{W'}(w) > 0$ implies $\scl_W(w) > 0$ \cite[Remark 1.10]{cancellation}.
\end{Rem}

\subsection{(Stable) reflection length vs (stable) torsion length}

Reflections are torsion elements. Hence Lemma \ref{Lem: length and stable length under homomorphism} implies that $\tl(w) \leq \rl(w)$ and $\stl(w) \leq \srl(w)$. Combined with a known relationship between $\scl$ and $\stl$ \cite[Proposition~1]{Kotschick2004} we obtain:

\begin{Lem}
\label{lem scl vs stl vs srl}
    Let $W$ be a Coxeter group. Then for all $w \in W$:
    \[
        2\scl(w)\leq \stl(w) \leq \srl(w) < \infty.
    \]
\end{Lem}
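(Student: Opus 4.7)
The plan is to chain three inequalities together, each of which follows almost directly from results already collected in this section; no new ideas are needed.

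First I would verify finiteness: $\srl(w) < \infty$. Since every simple reflection lies in $R$, the set of reflections contains $S$ and hence generates $W$. Thus $\rl(w) < \infty$ for every $w \in W$. The subadditivity $\rl(w^n) \leq n \cdot \rl(w)$ under the definition of $\srl$ as a Fekete-type limit then yields $\srl(w) \leq \rl(w) < \infty$.

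Next I would establish $\stl(w) \leq \srl(w)$. Every reflection has order two, so $R \subseteq T$, where $T$ is the set of torsion elements of $W$. Both $R$ and $T$ are conjugacy-invariant symmetric subsets, so the hypotheses of Lemma \ref{Lem: length and stable length under homomorphism} are satisfied for the identity homomorphism $W \to W$ sending $R$ into $T$. Applying that lemma gives $\tl(w) \leq \rl(w)$ and, in particular, $\stl(w) \leq \srl(w)$.

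Finally I would obtain $2\scl(w) \leq \stl(w)$ by citing the general comparison of Kotschick \cite[Proposition~1]{Kotschick2004}, valid in any group: this is the only step not internal to the present paper, and it is the whole reason $\scl$ enters the picture. Concatenating the three inequalities gives the statement.

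There is no real obstacle here: the proof is essentially bookkeeping. The only thing worth double-checking is that the set of reflections $R$ is symmetric and conjugacy-invariant (true by definition, and because every reflection is an involution), so that Lemma \ref{Lem: length and stable length under homomorphism} legitimately applies in the middle step.
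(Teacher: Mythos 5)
Your proof is correct and follows essentially the same route as the paper: the inclusion of reflections into torsion elements combined with Lemma \ref{Lem: length and stable length under homomorphism} (applied to the identity map) gives $\stl(w)\leq\srl(w)$, Kotschick's comparison gives $2\scl(w)\leq\stl(w)$, and finiteness holds since $R$ generates $W$. Nothing is missing.
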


In fact, the inequality between $\stl$ and $\srl$ is a bi-Lipschitz equivalence.

\begin{Lem}
\label{lem stl srl bilip}
    Let $W$ be a Coxeter group. Then there exists a constant $C = C(W)$ such that for all $w \in W$
    \[\tl(w) \leq \rl(w) \leq C \tl(w),\]
    and similarly for $\stl$ and $\srl$.
\end{Lem}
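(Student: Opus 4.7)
The plan is to prove the two inequalities $\tl \leq \rl$ and $\rl \leq C\tl$ separately, then pass to the stable versions by dividing by $n$ and taking limits.

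The inequality $\tl(w) \leq \rl(w)$ (and correspondingly $\stl(w) \leq \srl(w)$) is immediate from Lemma~\ref{Lem: length and stable length under homomorphism} applied to the identity homomorphism, since every reflection is a torsion element (an involution) and the set of reflections is conjugation-invariant. So the content of the lemma lies in the other direction. I propose to take $C := |S|$, where $S$ is the standard Coxeter generating set of $W$, and to show that every torsion element of $W$ has reflection length at most $|S|$.

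To establish this, I will combine two classical facts. First, the parabolic closure of a torsion element $t \in W$ is spherical: this is a standard theorem in Coxeter group theory (torsion elements in a Coxeter group are conjugate into finite parabolic subgroups; see e.g.~\cite{Davis2012}). Thus $\Pc(t) = v W_I v^{-1}$ for some spherical $I \subseteq S$ and some $v \in W$. Second, for a finite (spherical) Coxeter group $W_I$ with Coxeter generators $I$, every element has reflection length at most $|I|$: this is the classical result of Carter~\cite{Carter1972}, which expresses reflection length as the codimension of the fixed subspace in the geometric representation. Combining these with Lemma~\ref{lem srl parabolic} and the conjugation-invariance of reflection length yields
\[
\rl_W(t) = \rl_{v W_I v^{-1}}(t) = \rl_{W_I}(v^{-1} t v) \leq |I| \leq |S|.
\]

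For an arbitrary $w \in W$ with $\tl(w) = k$, write $w = t_1 \cdots t_k$ with each $t_i$ a torsion element. Subadditivity of $\rl$ under products then gives $\rl(w) \leq \sum_{i=1}^k \rl(t_i) \leq k \cdot |S| = |S| \cdot \tl(w)$, proving the first pair of inequalities with $C = |S|$. The analogous bound for the stable versions follows by applying this inequality to $w^n$, dividing by $n$, and taking the limit, using the well-definedness of $\sell_Y$ established after Definition~\ref{Def: stable length function}.

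The main obstacle is identifying the correct classical inputs and verifying that one can take a universal constant depending only on $W$ (rather than on $w$). The conceptual step---that all torsion elements lie in bounded-rank finite parabolics and that in such subgroups reflection length is bounded by the rank---should be straightforward once cited correctly, but care must be taken that $\rl_W$ restricted to a conjugate of $W_I$ really coincides with $\rl_{W_I}$, which is handled by Lemma~\ref{lem srl parabolic} together with the fact that conjugation permutes the set of reflections $R$.
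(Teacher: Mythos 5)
Your proof is correct, and its skeleton matches the paper's: the lower bound $\tl(w)\leq\rl(w)$ is immediate because reflections are torsion, and the upper bound comes from a uniform bound on $\rl$ over all torsion elements (possible because every torsion element lies in a finite parabolic subgroup), followed by subadditivity and stabilisation. The only difference is how the uniform bound is produced: the paper observes that the finite-parabolic fact gives finitely many conjugacy classes of torsion elements and sets $C=\max_i\rl(t_i)$ over representatives, whereas you combine Carter's codimension formula for finite reflection groups \cite{Carter1972} with Dyer's restriction result (Lemma \ref{lem srl parabolic}) and conjugation-invariance of $\rl$ to get the explicit constant $C=|S|$. Both routes are valid; yours has the mild advantage of an explicit constant, at the cost of invoking two further classical inputs instead of a finiteness argument.
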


\begin{proof}
    Clearly the statement for $\tl$ and $\rl$ implies the one for the stable versions. By \cite[Proposition 2.87]{Abramenko2008}, every torsion element of $W$ is contained in a finite parabolic subgroup. It follows that there are finitely many conjugacy classes of torsion elements in $W$, let us choose representatives $t_1, \ldots, t_n$. Letting $C \coloneqq \max_i \rl(t_i)$ we obtain the result.
\end{proof}

Since we are interested in the asymptotic behaviour, from now on we will only focus on $\scl$ and $\srl$. Moreover, we now know that positivity of $\scl$ implies positivity of $\srl$.

\subsection{Two questions}
\label{ss questions}

By analogy with common themes in $\scl$, we propose two motivating questions for future research.

\begin{Qu}
\label{q spectral gap}
    Let $W$ be a Coxeter group. Is there a \emph{spectral gap} in $\srl$ over $W$? Namely, does there exist a constant $C = C(W) > 0$ such that for every $w \in W$ either $\srl(w) > C$ or $\srl(w) = 0$?
\end{Qu}

When $W$ is a \emph{right angled} Coxeter group (Definition \ref{def racg}), a positive answer follows from Lemma \ref{lem scl vs stl vs srl}, Lemma \ref{lem involutions srl} below, and the spectral gap for $\scl$ \cite[Corollary 6.18]{Chen2023}. It is unknown whether a spectral gap in $\scl$ holds for \emph{all} Coxeter groups, but Question \ref{q spectral gap} could be more approachable.

\begin{Qu}
\label{q rationality}
    Let $W$ be a Coxeter group. Is $\srl(w)$ rational, for all $w \in W$?
\end{Qu}

Rationality is a very powerful property for $\scl$ and $\stl$, but it remains an open problem in Coxeter groups. If $W$ is a \emph{universal} Coxeter group, i.e. a free product of cyclic groups of order $2$, then $\scl$ \cite[Theorem A]{chen:products} and $\stl$ \cite[Theorem B]{Avery2023} are rational. In this case, all torsion elements are reflections, so $\srl = \stl$ is rational as well.

\section{Positivity of stable commutator length}
\label{sec. scl}

Thanks to Lemma \ref{lem reduction}, to understand (stable) reflection length, we may reduce to the case in which $W$ is of irreducible indefinite type. Moreover, by Lemma \ref{lem srl parabolic}, when studying the (stable) reflection length of an element $w \in W$, we may assume that $\Pc(w) = W$. In this section, we give a sufficient condition for $\scl_W(w) > 0$, which by Lemma \ref{lem scl vs stl vs srl} implies $\srl_W(w) > 0$.

\begin{Def}
\label{def:chiral}
    An element $g \in G$ is called \emph{achiral} if there exists $m \geq 1$ such that $g^m$ is conjugate to $g^{-m}$. Otherwise, $g$ is called \emph{chiral}.
\end{Def}

This is an algebraic terminology that is commonly used in the literature on $\scl$ (see e.g. \cite{scl:mcg}). For rank one elements in groups acting on CAT(0) spaces, it coincides with the more geometric notion of \emph{irreversible} from \cite{Caprace2009}: see \cite[Lemma 2.2(ii)]{Caprace2009}.

Achirality is an obvious obstruction to the positivity of $\scl$.

\begin{Lem}
\label{Lem: achiral bounded scl}
    If $g \in G$ and $m \geq 1$ are such that $g^m$ is conjugate to $g^{-m}$, then $g^{2km}$ is a commutator for all $k \geq 1$; in particular $\scl(g) = 0$.
\end{Lem}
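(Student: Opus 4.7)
The plan is to unpack the definition of achirality into an explicit conjugating element and then exhibit $g^{2km}$ directly as a commutator involving $g^{km}$.

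By hypothesis, there exists $h \in G$ with $h g^m h^{-1} = g^{-m}$. The first observation is that this conjugation relation passes to all multiples of $m$: raising both sides to the $k$-th power yields $h g^{km} h^{-1} = g^{-km}$ for every $k \geq 1$. Equivalently, $h g^{-km} h^{-1} = g^{km}$.

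Next, I would compute the commutator $[g^{km}, h]$ explicitly:
\[
[g^{km}, h] = g^{km} h g^{-km} h^{-1} = g^{km} \cdot g^{km} = g^{2km}.
\]
This shows $\cl(g^{2km}) \leq 1$ for every $k \geq 1$.

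Finally, to deduce $\scl(g) = 0$, I would apply the defining limit of stable commutator length along the subsequence of exponents $n = 2km$:
\[
\scl(g) = \lim_{n \to \infty} \frac{\cl(g^n)}{n} \leq \lim_{k \to \infty} \frac{\cl(g^{2km})}{2km} \leq \lim_{k \to \infty} \frac{1}{2km} = 0.
\]
(Here I use that $\scl(g)$ can equivalently be computed as the infimum of $\cl(g^n)/n$, so any subsequence bounds it from above.) There is no real obstacle in this argument; the only subtle point is recognising the right commutator to write down, which becomes transparent once one notes that the hypothesis $h g^m h^{-1} = g^{-m}$ is designed precisely so that conjugating $g^{-km}$ by $h$ returns $g^{km}$, allowing the two halves to combine into $g^{2km}$.
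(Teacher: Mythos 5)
Your proof is correct and is essentially the paper's argument: both exhibit $g^{2km}$ as the single commutator $[g^{km},h]=g^{km}h\,g^{-km}h^{-1}$ using the conjugating element from the hypothesis, and then conclude $\scl(g)=0$ from $\cl(g^{2km})\leq 1$. The only cosmetic difference is that you spell out the passage to powers and the limiting step, which the paper leaves implicit.
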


\begin{proof}
    Let $f \in G$ be such that $fg^{-m}f^{-1} = g^m$. Then $g^{2km} = g^{km} f g^{-km} f^{-1}$.
\end{proof}

The other direction is more interesting.

\begin{Thm}
\label{thm scl positivity}
    Let $W$ be a Coxeter group of irreducible indefinite type. Let $w \in W$ be such that $\Pc(w) = W$. Then $\scl(w) > 0$ if and only if $w$ is chiral.
\end{Thm}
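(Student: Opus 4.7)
The plan is to prove the two implications separately; the easy direction, that achirality forces $\scl(w)=0$, is exactly Lemma~\ref{Lem: achiral bounded scl}. The content is thus to show: if $w$ is chiral, then $\scl(w)>0$.

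First I would reduce to the case where $w$ is straight. By Lemma~\ref{lem straight part}(2), there exist $n,N\geq 1$ with $w^N = w_c^{nN} = w_{\infty}^N$. Because $\scl$ is homogeneous under powers, $\scl(w)=\scl(w_\infty)$, and because chirality passes between an element and its nonzero powers, $w$ is chiral if and only if $w_\infty$ is chiral: indeed, if $(w_\infty)^m$ is conjugate to $(w_\infty)^{-m}$, then $w^{Nm}=(w_\infty)^{Nm}$ is conjugate to $w^{-Nm}$, and conversely. So I may assume $w=w_\infty$ is straight, and $\Pc(w)=W$ with $W$ of irreducible indefinite type.

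Next I would move to the geometric input. The Davis complex $\Sigma$ of $W$ is a proper CAT(0) space on which $W$ acts properly and cocompactly by isometries. The combination ``straight'' $+$ ``$\Pc(w)=W$'' $+$ ``$W$ irreducible indefinite'' is designed precisely to force $w$ to act as a \emph{rank-one} hyperbolic isometry on $\Sigma$: intuitively, $w$ is not contained in any proper parabolic, so its axis cannot sit inside a flat of arbitrary width coming from a reducible spherical or affine sub-Coxeter system. Under these hypotheses, the notion of algebraic chirality of $w$ coincides with the geometric notion of irreversibility for rank-one isometries, as recorded in \cite[Lemma 2.2(ii)]{Caprace2009}.

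Finally, I would apply the Bestvina--Fujiwara/Caprace--Fujiwara machinery: a chiral rank-one isometry of a proper CAT(0) space, for a group action satisfying a weak proper discontinuity property, yields a nontrivial homogeneous quasimorphism $\phi\colon W\to\RR$ with $\phi(w)\neq 0$; Bavard duality then gives $\scl(w)\geq |\phi(w)|/(2D(\phi))>0$. The principal obstacle I anticipate is the middle step, namely verifying that every straight $w$ with $\Pc(w)=W$ in an irreducible indefinite Coxeter group does act as a rank-one WPD isometry on $\Sigma$; this is the place where all the Coxeter-theoretic hypotheses, and in particular the control afforded by the core splitting in Lemma~\ref{lem straight part}, should be exploited to fit into the CAT(0) framework of \cite{Caprace2009}.
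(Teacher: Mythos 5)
Your overall strategy (Davis complex, rank-one isometry, quasimorphism, Bavard duality) is the same as the paper's, but the proposal has a genuine gap exactly where you flag the ``principal obstacle'': you never establish that $w$ is a rank-one element with the required weak proper discontinuity, nor that chirality of a rank-one element actually produces a homogeneous quasimorphism nonvanishing on $w$. The first point is not something that needs the core splitting or a reduction to the straight case: it is precisely \cite[Proposition~4.5]{Caprace2009}, which applies to \emph{any} $w$ with $\Pc(w)=W$ in an irreducible indefinite Coxeter group acting on the Davis complex (a proper $\CAT(0)$ space by \cite{Moussong1988}). So your preliminary reduction to $w=w_\infty$ via Lemma~\ref{lem straight part}(2), while logically valid, is superfluous and does not help with the step you identify as the obstacle. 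The second point is more delicate than ``apply the Bestvina--Fujiwara/Caprace--Fujiwara machinery'': as the paper itself notes, the statements of \cite{BFCAT0} and \cite[Theorem~1.8]{Caprace2009} do not immediately yield the theorem, because one needs a quasimorphism that is positive on the given chiral element, not merely an infinite-dimensional space of quasimorphisms.

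The paper fills both gaps by a chain of citations you would need to reproduce or replace: rank-one (from \cite[Proposition~4.5]{Caprace2009}) implies via \cite[Theorem~1.5]{Sisto2016} that $W$ is acylindrically hyperbolic and $w$ is generalised loxodromic; by \cite[Theorem~1.4]{Osin2015} there is a non-elementary acylindrical (hence WPD) action on a hyperbolic graph with $w$ loxodromic; and then chirality of $w$ feeds into \cite[Theorem~4.2]{Fournier-Facio2023}, which produces a homogeneous quasimorphism $\varphi$ with $\varphi(w)>0$, after which Bavard duality concludes as you say. Without carrying out (or correctly citing) these two steps, your argument is an outline of the right shape rather than a proof.
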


This was essentially achieved by Bestvina--Fujiwara \cite[Main Theorem]{BFCAT0} and Caprace--Fujiwara \cite[Theorem 1.8]{Caprace2009}. However their statements do not immediately give Theorem \ref{thm scl positivity}. Since this is essentially a known result, we only give a minimal proof citing the literature, and refer the reader to those papers for the relevant definitions.

\begin{proof}
    One direction is given by Lemma \ref{Lem: achiral bounded scl}. Consider the proper action of $W$ on the Davis complex $\Sigma(W)$ \cite{Davis2012}, which equipped with an appropriate piecewise Euclidean metric is a proper $\CAT(0)$ space \cite[Theorem~A]{Moussong1988}. The hypothesis implies that $w$ is a rank one element \cite[Proposition~4.5]{Caprace2009}. In particular \cite[Theorem 1.5]{Sisto2016} implies that $W$ is acylindrically hyperbolic (cf. \cite[Theorem 4.4]{Soergel2024}) and $w \in W$ is a generalised loxodromic element. By \cite[Theorem 1.4]{Osin2015}, there is a non-elementary acylindrical (therefore WPD) action of $W$ on a hyperbolic graph such that $w$ is loxodromic. Since moreover $w$ is assumed to be chiral, \cite[Theorem 4.2]{Fournier-Facio2023} implies that there exists a homogeneous quasimorphism $\varphi \colon W \to \RR$ such that $\varphi(w) > 0$. By Bavard duality \cite{bavard}, this implies that $\scl(w) > 0$.
\end{proof}

\section{Chirality and products of involutions}
\label{sec. main}

In this section we characterise chirality in terms of products of involutions (elements which can be expressed as a product of at most two involutions are also known as \emph{strongly real} elements). Let us first observe how this has strong consequences for $\rl$.

\begin{Lem}
\label{lem involutions srl}
    Let $W$ be a Coxeter group, let $w \in W$ and suppose that there exist $a,b \in W$ such that $a^2 = b^2 = \id$ and $w = ab$. Then $\{ \rl(w^n) \}_{n \geq 1}$ is bounded, in particular $\srl(w) = 0$.
\end{Lem}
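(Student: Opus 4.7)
The plan is to express $w^n$ as a product of two torsion elements in a way that is uniform in $n$, and then invoke the standard uniform bound on the reflection length of torsion elements in a Coxeter group.

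The first step would be the observation that, since $a$ and $b$ are involutions, conjugation by $a$ inverts $w$: indeed
$$awa = a(ab)a = ba = w^{-1}.$$
Raising to powers, $aw^n a = w^{-n}$ for every $n \geq 1$, so
$$(aw^n)^2 = (aw^n a)\, w^n = w^{-n} w^n = \id.$$
Hence $aw^n$ is either the identity or an involution, and in particular a torsion element of $W$. Combined with $a$ being an involution, this gives the factorisation $w^n = a \cdot (aw^n)$ as a product of two torsion elements.

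For the second step, I would invoke the fact, already used in the proof of Lemma \ref{lem stl srl bilip}, that there is a constant $C = C(W)$ bounding the reflection length of every torsion element of $W$; this stems from \cite[Proposition~2.87]{Abramenko2008} together with the finiteness of the set of conjugacy classes of finite parabolic subgroups of $W$. The subadditivity of $\rl$ under products then yields
$$\rl(w^n) \leq \rl(a) + \rl(aw^n) \leq 2C$$
for all $n \geq 1$. In particular $\{\rl(w^n)\}_{n \geq 1}$ is bounded and $\srl(w) = \lim_{n \to \infty} \rl(w^n)/n = 0$.

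There is no substantial obstacle: the involution hypothesis supplies, for free, an element inverting $w$ by conjugation, and once $w^n$ has been written as a bounded-length product of torsion elements the uniform torsion bound closes the argument immediately.
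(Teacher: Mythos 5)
Your proof is correct, but it takes a slightly different route from the paper's. You factor $w^n = a \cdot (aw^n)$, observe via $awa = w^{-1}$ that $aw^n$ is an involution (or the identity), and then control both factors by the uniform constant $C(W)$ bounding the reflection length of all torsion elements --- a fact that is indeed available, since it is exactly what the proof of Lemma \ref{lem stl srl bilip} establishes (using that torsion elements lie in finite parabolic subgroups), and that lemma precedes the present one, so there is no circularity. The paper instead writes down explicit decompositions of $w^n$ as a product of conjugates of $a$ and $b$ (namely $w^n = w^j a w^{-j} b$ for $n = 2j+1$ odd, and $w^n$ as a product of two conjugates of $b$, or of $a$, for $n$ even), and uses only the conjugation invariance and subadditivity of $\rl$. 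This buys two things your argument does not: an explicit bound $\rl(w^n) \leq \rl(a) + \rl(b)$ (respectively $2\min\{\rl(a),\rl(b)\}$), local to the given involutions rather than the global constant $2C(W)$, and independence from the Coxeter-specific input of \cite[Proposition 2.87]{Abramenko2008} --- the paper's argument works verbatim for any conjugation-invariant length function on any group. Your version, in exchange, is shorter and highlights the structural point (conjugation by $a$ inverts $w$, so $aw^n$ is torsion) that also underlies Lemma \ref{lem involutions product conjugate}.
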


\begin{proof}
    More precisely, we will show that in this case
    \begin{equation}
    \label{eq:bound}
    \rl(w^n) \leq 
    \begin{cases}
        \rl(a) + \rl(b) \text{ if } n \text{ is odd}; \\
        2 \min\{\rl(a), \rl(b)\} \text{ if } n \text{ is even}.
    \end{cases}
    \end{equation}
    Suppose first that $n = 2j+1$ is odd. Then we write
    \[
    w^n = (a b)^{2j+1} = w^j a w^{-j} b;
    \]
    so $w^n$ is a product of a conjugate of $a$ and $b$. Suppose now that $n$ is even. Then we write
    \[
    w^n = (ab \cdots ba) b (ab \cdots ba)^{-1} b;
    \]
    so $w^n$ is a product of two conjugates of $b$. Similarly, $w^n$ is a product of two conjugates of $a$.
\end{proof}

Here is an equivalent property, which will arise more naturally in our arguments.

\begin{Lem}
\label{lem involutions product conjugate}
    Let $G$ be a group, and let $g \in G$. Then the following are equivalent.
        \begin{enumerate}
        \item $g$ and $g^{-1}$ are conjugate by some $x \in G$ such that $x^2 = \id$.
        \item There exist $a, b \in G$ such that $a^2 = b^2 = \id$ and $g = ab$.
    \end{enumerate}
\end{Lem}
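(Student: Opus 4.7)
The claim is a purely group-theoretic statement involving only the involution conditions and conjugation, so my plan is to prove each implication by an explicit, direct algebraic construction, with no obstacle beyond bookkeeping.

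For the implication $(2) \Rightarrow (1)$, suppose $g = ab$ with $a^2 = b^2 = \id$. The plan is to simply take $x := a$. Then $x^2 = \id$ by hypothesis, and I compute
\[
xgx^{-1} = a(ab)a^{-1} = a(ab)a = (a^2)(ba) = ba = b^{-1}a^{-1} = (ab)^{-1} = g^{-1},
\]
using only that $a$ (and $b$) is its own inverse. So $x = a$ witnesses (1).

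For the implication $(1) \Rightarrow (2)$, suppose $x^2 = \id$ and $xgx^{-1} = g^{-1}$. The idea is to factor off $x$ on the left: set $a := x$ and $b := xg$. Then $a^2 = \id$ by assumption and $ab = x \cdot xg = x^2 g = g$, so the factorisation is correct. To verify $b^2 = \id$, I rewrite the conjugation relation as $xg = g^{-1}x$ (using $x^{-1} = x$), which gives
\[
b^2 = (xg)(xg) = (xg)(g^{-1}x) = x \cdot x = \id.
\]
This gives the required presentation of $g$ as a product of two involutions.

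Both directions are one-line computations once the right $x$, $a$, $b$ are chosen, and the choices mirror each other: $a$ in (2) plays the role of $x$ in (1), while $b$ in (2) is recovered as $xg$ in (1). There is no real obstacle; the only thing to be mindful of is using $x = x^{-1}$ and $a = a^{-1}$, $b = b^{-1}$ consistently when rearranging products.
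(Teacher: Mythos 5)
Your proof is correct and follows essentially the same approach as the paper: in one direction conjugate by one of the involutions, and in the other factor the conjugating involution out of $g$ (the paper writes $g = (gx)\cdot x$ where you write $g = x\cdot(xg)$, a trivial mirror-image of the same computation).
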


\begin{proof}
    If $xgx=g^{-1}$ for some $x\in G$ with $x^2=\id$, then $(gx)^2=x^2=\id$ and (2) holds with $a=gx$ and $b=x$. Conversely, if $g=ab$ for some $a, b\in G$ with $a^2=b^2=\id$, then $a ga=g^{-1}$.
\end{proof}

The next theorem is the key result, which interprets achirality in terms of products of involutions. We refer the reader to Subsection \ref{ss straight} for the relevant definitions.

\begin{Thm}
\label{thm achiral straight conjugate involutions}
    Let $W$ be a Coxeter group of irreducible indefinite type, and let $w \in W$ with $\Pc(w) = W$. If $w$ is achiral, then $w_c$ and $w_c^{-1}$ are conjugate by an involution.
\end{Thm}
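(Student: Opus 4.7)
The strategy is to first descend the achirality of $w$ to an inversion of $w_c$ by some $v \in W$, and then upgrade $v$ to an involution.

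\emph{Stage 1 (descent to $w_c$).} Achirality provides $v \in W$ and $m \geq 1$ with $vw^m v^{-1} = w^{-m}$. By Lemma~\ref{lem straight part}(2), after replacing $m$ by a suitable multiple, I may assume $w^m = w_c^M$ for some $M \geq 1$, so that $vw_c^M v^{-1} = w_c^{-M}$. The key observation is that the core map is conjugation-equivariant: $\core(gxg^{-1}) = g\core(x)g^{-1}$ for any $g \in W$ and any $x$ with $\Pc(x) = W$. This follows from the uniqueness of the core splitting combined with the equivariance $P_{gxg^{-1}}^{\max} = g P_x^{\max} g^{-1}$ of the maximal normalising spherical parabolic (itself immediate from the defining property of $P_x^{\max}$). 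Applying $\core$ to $vw_c^M v^{-1} = w_c^{-M}$ and using Lemma~\ref{lem straight part}(3) to compute $\core(w_c^{\pm M}) = w_c^{\pm 1}$, I obtain $vw_c v^{-1} = w_c^{-1}$.

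\emph{Stage 2 ($v$ is torsion).} Squaring shows $v^2$ commutes with $w_c$, so Lemma~\ref{lem straight part}(5) gives $v^2 = aw_c^n$ with $a \in P_w^{\max}$ and $n \in \ZZ$. Equipping the Davis complex $\Sigma(W)$ with the Moussong $\CAT(0)$ metric \cite{Moussong1988}, $w_c$ is a rank-one isometry (by \cite[Proposition~4.5]{Caprace2009}) with a geodesic axis $\gamma$, and from $vw_cv^{-1}=w_c^{-1}$ the element $v$ reverses $\gamma$. Thus $v^2$ acts trivially on $\gamma$. Comparing with the right-hand side: $w_c^n$ translates $\gamma$ by $n \cdot \|w_c\|$, while the torsion element $a$ preserves $\gamma$ setwise and so acts on $\gamma$ as the identity or as a reflection, but never as a nontrivial translation. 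A case analysis forces $n = 0$ and $a$ to fix $\gamma$ pointwise. Hence $v^2 = a \in P_w^{\max}$ has finite order, so $v$ is torsion.

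\emph{Stage 3 (the main obstacle: producing an involution).} The torsion element $v$ fixes the midpoint $p \in \gamma$ of its reflection on $\gamma$, so $v$ lies in the finite parabolic subgroup $W_p := \mathrm{Stab}_W(p)$. Working in the reflection representation of $W_p$, the tangent direction $\xi$ to $\gamma$ at $p$ satisfies $v \cdot \xi = -\xi$. The heart of the argument is to produce an involution $\tau \in W_p$ with $\tau\xi = -\xi$. The setwise stabiliser $L := \mathrm{Stab}_{W_p}(\RR\xi)$ contains $v$ and fits into a short exact sequence $1 \to \mathrm{Stab}_{W_p}(\xi) \to L \to \ZZ/2 \to 1$, where $\mathrm{Stab}_{W_p}(\xi)$ is generated by reflections of $W_p$ (Steinberg's theorem). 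I expect splitting of this sequence to follow from the Coxeter structure of $W_p$, reducing via the decomposition into irreducible factors to each finite Coxeter type (invoking the longest element $-\mathrm{id}$ where available, and otherwise performing an explicit analysis). The resulting involution $\tau \in W$ reverses $\gamma$, so $\tau w_c \tau^{-1}$ agrees with $w_c^{-1}$ on $\gamma$. The last, and most delicate, step is to promote this local agreement to a global identity $\tau w_c \tau^{-1} = w_c^{-1}$ in $W$: any residual discrepancy lies in $\mathrm{Fix}_W(\gamma) \cap Z_W(w_c)$, whose structure is controlled by Lemma~\ref{lem straight part}(5), and should permit a further modification of $\tau$ to eliminate it. Reconciling local tangent-space data at $p$ with a global group-theoretic identity in $W$ is where I anticipate the most delicate reasoning to be required.
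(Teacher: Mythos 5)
There are two genuine gaps here, one at the start and one at the heart of the argument. In Stage 1, your ``key observation'' that $\core(gxg^{-1})=g\core(x)g^{-1}$ for \emph{arbitrary} $g\in W$ is not true, and it is exactly the point that Lemma~\ref{lem straight part}(4) is careful about: equivariance is only available when the conjugator has minimal length in its coset of $P_w^{\max}$. In fact the core is \emph{invariant}, not equivariant, under conjugation by elements of $P_w^{\max}$ (for $x_I\in W_I=P_w^{\max}$ one has $x_I^{-1}w_cx_I\in W_Iw_c$, hence $\core(x_I^{-1}w_cx_I)=w_c$ by uniqueness of the core splitting), which is incompatible with your claim unless $P_w^{\max}$ centralises $w_c$. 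Consequently your conclusion that the \emph{original} conjugator $v$ satisfies $vw_cv^{-1}=w_c^{-1}$ is unjustified: from $x^{-1}w_c^mx=w_c^{-m}$ one only gets $x^{-1}w_cx=b'w_c^{-1}$ with $b'\in P_w^{\max}$. The correct move (the one the paper makes) is to write $x=x_I\overline{x}$ with $x_I\in W_I$ and $\overline{x}$ of minimal length in $xW_I$ (note that $x$ normalises $W_I$ by Lemma~\ref{lem straight part}(1)), and then the core computation yields $\overline{x}^{-1}w_c\overline{x}=w_c^{-1}$ for this specific representative.

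The second and more serious gap is Stage 3, which is where the theorem actually lives and which you explicitly leave open: the splitting of your exact sequence in the finite stabiliser $W_p$, and the ``further modification'' of $\tau$ needed to upgrade agreement along an axis $\gamma$ to the group identity $\tau w_c\tau^{-1}=w_c^{-1}$, are both unproven, and it is not clear either would go through (note also that in Stage 2 there is no reason for $a$ to preserve a chosen axis $\gamma$ setwise, nor for $v$ to preserve a single axis rather than the whole min-set). The paper avoids this entire geometric construction: no new involution has to be manufactured, because the canonical conjugator $\overline{x}$ is automatically one. Indeed $\overline{x}^2$ commutes with $w_c$, so by Lemma~\ref{lem straight part}(5) $\overline{x}^2=aw_c^n$ with $a\in W_I$; the case $n\neq 0$ is excluded by comparing cores (it would force $w_c=w_c^{-1}$), so $\overline{x}^2\in W_I$, whence $\overline{x}W_I=\overline{x}^{-1}W_I$ and uniqueness of the minimal-length coset representative gives $\overline{x}=\overline{x}^{-1}$, i.e.\ $\overline{x}^2=\id$. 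You should replace Stages 2--3 by this combinatorial argument (or supply complete proofs of the splitting and patching steps, which would amount to a substantially harder route).
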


\begin{proof}
    Up to conjugating $w$, we may assume by Lemma~\ref{lem straight part}(4) that $P_w^{\max}=W_I$ for some spherical subset $I\subseteq S$. Let $m \geq 1$ and $x\in W$ such that $x^{-1} w^mx=w^{-m}$. Up to replacing $m$ by some multiple, we may further assume by Lemma~\ref{lem straight part}(2) that $x^{-1} w_c^mx=w_c^{-m}$. 
    Lemma~\ref{lem straight part}(3) then yields
    \[w_c^{-1}=\core(w_c^{-1})=\core(w_c^{-m})=\core(x^{-1} w_c^mx)=\core(x^{-1} w_c x).\]
    Note that
    \[x^{-1} P^{\max}_wx=x^{-1} P^{\max}_{w^m}x=P^{\max}_{x^{-1} w^mx}=P^{\max}_{w^{-m}}=P^{\max}_w\]
    by Lemma~\ref{lem straight part}(1), and hence $x$ normalises $P_w^{\max}=W_I$. Write $x=x_I\overline{x}$ with $x_I\in W_I$ and $\overline{x}$ of minimal length in $xW_I=W_Ix$. Then
    \[x^{-1} w_c x=\overline{x}^{-1}\cdot x_I^{-1} w_c x_I\cdot \overline{x}=\overline{x}^{-1}\cdot (x_I^{-1} w_c x_Iw_c^{-1})w_c\cdot \overline{x},\] with $x_I^{-1} w_c x_Iw_c^{-1}\in W_I$. In particular, $x_I^{-1} w_c x_I \in W_I w_c$ so that $\core(x_I^{-1} w_c x_I)=w_c$ by uniqueness of the core splitting. Lemma~\ref{lem straight part}(4) then yields
    \[\core(x^{-1} w_c x)=\overline{x}^{-1}\core(x_I^{-1} w_c x_I)\overline{x}=\overline{x}^{-1} w_c\overline{x},\]
    and hence $\overline{x}^{-1} w_c\overline{x}=w_c^{-1}$.
    
    In particular, $\overline{x}^2$ commutes with $w_c$. By Lemma~\ref{lem straight part}(5), this implies that $\overline{x}^2=aw_c^n$ for some $a\in W_I$ and $n\in\ZZ$. If $n\neq 0$, then $\Pc(\overline{x}^2)=W$ and $P_{\overline{x}^2}^{\max}=W_I$ by Lemma~\ref{lem straight part}(1), and hence $\core(\overline{x})=\core(\overline{x}^2)=w_c^{\eps}$ by Lemma~\ref{lem straight part}(3), where $\eps\in\{\pm 1\}$ is the sign of $n$. Thus, in that case, $\overline{x}$ has core splitting $\overline{x}=bw_c^{\eps r}$ for some $b\in W_I$ and $r \geq 1$, and hence $w_c^{-1}=\overline{x}^{-1} w_c\overline{x}=b'w_c$ for some $b'\in W_I$. Comparing cores yields $w_c^{-1} = w_c$, contradicting the fact that $w_c$ has infinite order. Therefore, $n=0$ and $\overline{x}^2\in W_I$. Since $\overline{x}$ is the unique element of minimal length in $\overline{x}W_I$, it follows from $\overline{x}W_I=\overline{x}^{-1} W_I$ that $\overline{x}^2=\id$, and we conclude.
\end{proof}

\begin{Rem}
    In \cite[Lemma 4.8]{Caprace2009}, the authors show that, if $w$ is achiral, then $w^k$ is a product of two involutions, where $k$ is the index in $W$ of a torsion free finite index normal subgroup $W_0$. Theorem \ref{thm achiral straight conjugate involutions} removes the passage to a power and recovers that result: if $w = a w_\infty$ for some $a \in P^{\max}_w$, then $w^k = a' w_\infty^k$ for some $a' \in P^{\max}_w$. As $w_\infty^k$ and $w^k$ both belong to $W_0$, the torsion-freeness of $W_0$ implies that $a' = \id$ and hence that $w^k = w_\infty^k$.
\end{Rem}

\begin{Cor}
\label{cor main rank one}
    Let $W$ be a Coxeter group of irreducible indefinite type, and let $w \in W$ with $\Pc(w) = W$. Then the following are equivalent.
    \begin{enumerate}
        \item $\srl(w) = 0$;
        \item $\{ \rl(w^n) \}_{n \geq 1}$ is bounded;
        \item $\scl(w) = 0$;
        \item $w^{mk}$ is a commutator, for some $m \geq 1$ and all $k \geq 1$;
        \item $w$ is achiral;
        \item The core of $w$ is a product of two involutions;
        \item The straight part of $w$ is a product of two involutions.
    \end{enumerate}
\end{Cor}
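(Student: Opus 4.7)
The plan is to prove the seven conditions equivalent by closing the cycle $(2) \Rightarrow (1) \Rightarrow (3) \Rightarrow (5) \Rightarrow (6) \Rightarrow (7) \Rightarrow (2)$, with $(4)$ inserted as equivalent to $(3)$ and $(5)$. The implication $(2) \Rightarrow (1)$ is immediate from the definition of $\srl$; $(1) \Rightarrow (3)$ is Lemma~\ref{lem scl vs stl vs srl}; $(3) \Rightarrow (5)$ is the contrapositive of Theorem~\ref{thm scl positivity}; and $(5) \Leftrightarrow (4)$ combines Lemma~\ref{Lem: achiral bounded scl} with the obvious $(4) \Rightarrow (3)$ (by definition of $\scl$).

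The heart of the argument is $(5) \Rightarrow (6)$, which is delivered directly by Theorem~\ref{thm achiral straight conjugate involutions} together with Lemma~\ref{lem involutions product conjugate}: achirality of $w$ yields that $w_c$ and $w_c^{-1}$ are conjugate by an involution, which by Lemma~\ref{lem involutions product conjugate} means exactly that $w_c$ is a product of two involutions. For $(6) \Rightarrow (7)$ the calculation is short: if $w_c = ab$ with $a^2 = b^2 = \id$, then $a w_c a = w_c^{-1}$, so $a w_\infty a = a w_c^n a = w_\infty^{-1}$, and a second application of Lemma~\ref{lem involutions product conjugate} provides the desired decomposition of $w_\infty$.

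To close the cycle with $(7) \Rightarrow (2)$, I would apply Lemma~\ref{lem involutions srl} to $w_\infty$ to bound $\{\rl(w_\infty^k)\}_{k \geq 1}$; writing an arbitrary exponent as $m = n_0 k + r$ with $0 \leq r < n_0$ (where $w_\infty = w_c^{n_0}$) extends this to a uniform bound on $\{\rl(w_c^m)\}_{m \geq 1}$. By Lemma~\ref{lem straight part}(2) some power $w^N$ is a power of $w_c$, so $\{\rl(w^{Nk})\}_{k \geq 1}$ is bounded, and Lemma~\ref{lem length bounded powers} transfers the bound to all powers of $w$. The only genuinely non-trivial step in the chain is $(5) \Rightarrow (6)$, which is precisely Theorem~\ref{thm achiral straight conjugate involutions}; the remaining steps amount to elementary bookkeeping relating $w$, its core $w_c$, and its straight part $w_\infty$ via the core splitting.
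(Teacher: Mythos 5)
Your proposal is correct and follows essentially the same route as the paper: the cycle $(2)\Rightarrow(1)\Rightarrow(3)\Rightarrow(5)\Rightarrow(6)\Rightarrow(7)\Rightarrow(2)$ using Lemma \ref{lem scl vs stl vs srl}, Theorem \ref{thm scl positivity}, Theorem \ref{thm achiral straight conjugate involutions} with Lemma \ref{lem involutions product conjugate}, then Lemma \ref{lem straight part}(2), Lemma \ref{lem involutions srl} and Lemma \ref{lem length bounded powers}, with $(4)$ inserted via Lemma \ref{Lem: achiral bounded scl} exactly as in the paper. The only difference is your slightly redundant detour bounding $\rl$ on all powers of $w_c$ before passing to $w^N=w_c^{nN}$, which is harmless.
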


Note that we cannot formally state that $\{\cl(w^n)\}_{n \geq 1}$ is bounded, since if $w$ is not in the commutator subgroup, then by definition this sequence will take the value $\infty$ infinitely many times.

\begin{proof}
    If $\{ \rl(w^n) \}_{n \geq 1}$ is bounded, then $\srl(w) = 0$. Then by  Lemma \ref{lem scl vs stl vs srl} also $\scl(w) = 0$, which by Theorem \ref{thm scl positivity} implies that $w$ is achiral. Theorem \ref{thm achiral straight conjugate involutions} and Lemma \ref{lem involutions product conjugate} in turn imply that $w_c$ is a product of two involutions. By Lemma \ref{lem involutions product conjugate}, this property passes to powers, and thus the straight part is also a product of two involutions. Passing to a further power, by Lemma \ref{lem straight part}(2), there is some $N \geq 1$ such that $w^N$ is a product of two involutions, and so $\{ \rl(w^{Nk}) \}_{k \geq 1}$ is bounded by Lemma \ref{lem involutions srl}. Then Lemma \ref{lem length bounded powers} implies that $\{ \rl(w^n) \}_{n \geq 1}$ is also bounded.
    This gives the equivalence of all items, except for (4), but (5) $\Rightarrow$ (4) $\Rightarrow$ (3) by Lemma \ref{Lem: achiral bounded scl}.
\end{proof}

\begin{proof}[Proof of Theorem \ref{intro thm main}]
    By Lemma \ref{lem srl parabolic}, we may assume that $\Pc(w) = W$, and by Lemma \ref{lem reduction} we may assume that $W$ is of irreducible indefinite type. Then the result follows from Corollary \ref{cor main rank one}.
\end{proof}

\section{Coxeter elements}
\label{sec. combinatorial}

Recall that a \emph{Coxeter word} is a word in the alphabet $S$ where every generator appears exactly once, and a \emph{Coxeter element} is one represented by a Coxeter word. If $w \in W$ is a Coxeter element, then $\Pc(w) = W$ \cite[Corollary 4.3]{Caprace2009}.

\begin{Thm}
\label{thm cliques}
    Let $(W, S)$ be a Coxeter system of irreducible indefinite type with Coxeter graph $\Gamma$.
    \begin{enumerate}
        \item There exists a Coxeter element that is conjugate to its inverse if and only if $\Gamma$ is bipartite.
        \item Every Coxeter element is conjugate to its inverse if and only if $\Gamma$ is a tree.
    \end{enumerate}
\end{Thm}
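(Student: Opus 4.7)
The plan is to translate everything through Corollary~\ref{cor main rank one} into a combinatorial question about orientations of $\Gamma$, and then to invoke the known classification of Coxeter elements up to conjugacy in terms of such orientations. For a Coxeter element $w$ one has $\Pc(w)=W$ by \cite[Corollary 4.3]{Caprace2009}, and $w$ is straight in the indefinite case, so $w_{\infty}=w$. Corollary~\ref{cor main rank one} combined with Lemma~\ref{lem involutions product conjugate} then reduces both parts of the theorem to the single question: \emph{when can (respectively, must) a Coxeter element be expressed as a product of two involutions in $W$?}

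For (1)$\Leftarrow$, if $\Gamma$ is bipartite with parts $S=A\sqcup B$, then vertices within each part are non-adjacent in $\Gamma$ and thus pairwise commute, so $a:=\prod_{s\in A} s$ and $b:=\prod_{s\in B} s$ are involutions, making $w:=ab$ a Coxeter element that is visibly a product of two involutions. For the remaining implications I would invoke the Eriksson--Eriksson classification: two Coxeter elements are conjugate in $W$ if and only if their associated acyclic orientations of $\Gamma$ --- where one orients $s\to t$ whenever $s$ precedes $t$ in the Coxeter word --- are related by a sequence of source-to-sink flips, each flip at a vertex $v$ corresponding to the conjugation $w\mapsto s_v w s_v$. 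Under this bijection $w\leftrightarrow \omega$ implies $w^{-1}\leftrightarrow \omega^{\mathrm{op}}$.

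The decisive observation is a cycle invariant: for any simple cycle $C\subseteq \Gamma$ with a chosen cyclic direction, the number $c_C(\omega)$ of edges of $C$ oriented along that direction is preserved by source-to-sink flips, because at each vertex of $C$ both incident cycle-edges reverse simultaneously. Since $c_C(\omega^{\mathrm{op}})=|C|-c_C(\omega)$, the equivalence $\omega\sim \omega^{\mathrm{op}}$ forces $|C|$ to be even, giving (1)$\Rightarrow$ at once. For (2)$\Leftarrow$, when $\Gamma$ is a tree a straightforward induction shows any two acyclic orientations of $\Gamma$ are linked by source-to-sink flips, so all Coxeter elements of $W$ are mutually conjugate; combined with (1)$\Leftarrow$ (trees are bipartite), every Coxeter element is conjugate to its inverse. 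Finally for (2)$\Rightarrow$, assuming every Coxeter element is conjugate to its inverse, $\Gamma$ is bipartite by (1)$\Rightarrow$; should $\Gamma$ further contain an even cycle $C$ of length $2k\geq 4$, I would build an acyclic orientation $\omega$ of $\Gamma$ with $c_C(\omega)=1$ by choosing a suitable linear order on $S$ (placing a fixed vertex of $C$ as the global minimum and the remaining vertices of $C$ in the order obtained by travelling from it along one of the two $C$-paths) and extending it to orient $\Gamma$; the contradiction $c_C(\omega^{\mathrm{op}})=2k-1\neq 1$ then concludes.

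The main obstacle is the reliance on the Eriksson--Eriksson classification of conjugacy classes of Coxeter elements: it supplies the only systematic way to certify that two Coxeter elements are non-conjugate, and without it the cycle invariant cannot be translated into a statement about conjugacy in $W$. The rest of the argument --- constructing orientations with prescribed cycle behaviour and the standard induction for trees --- is routine combinatorics.
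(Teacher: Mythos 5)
Your argument is correct, but it follows a different route from the paper, so let me compare. Both proofs ultimately rest on the same external input --- the classification of conjugacy of Coxeter elements via source-to-sink flips of acyclic orientations (the results cited in the paper as \cite{cc:coxeter, cc:coxeter:old}) --- but you apply it directly to the whole graph: the flip-invariance of the statistic $c_C(\omega)$ (number of cycle edges oriented along a fixed direction of a simple cycle $C$), together with $c_C(\omega^{\mathrm{op}})=|C|-c_C(\omega)$, immediately forces every cycle to be even in part (1) and rules out any cycle at all in part (2), with no need for chordless cycles. The paper instead first passes to the right-angled cover (Lemma \ref{lem:elements:cover}), then retracts onto the standard parabolic subgroup of a \emph{minimal} (hence chordless) cycle --- a retraction that only exists after the right-angled reduction --- and there invokes the full ``two Coxeter words on a cycle are conjugate iff they have the same curl'' criterion (Lemma \ref{lem cycle}); note that your $c_C$ is just an affine reparametrisation of that curl, so the combinatorial core is the same. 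Your version buys a cleaner global argument (only the easy invariance direction of the orientation classification is needed, no cover, no retraction, no minimality of the cycle), at the price of quoting the harder ``conjugate $\Rightarrow$ flip-related'' theorem of Eriksson--Eriksson for arbitrary Coxeter systems, which is comparable to what the paper cites anyway. Two minor remarks: your opening reduction via Corollary \ref{cor main rank one} to ``product of two involutions'' is valid (Coxeter elements are straight with full parabolic closure) but superfluous --- for (1)$\Leftarrow$ you only need Lemma \ref{lem involutions product conjugate}, and the rest of your proof works with conjugacy to the inverse directly --- and in (2)$\Rightarrow$ your constructed orientation has $c_C\in\{1,|C|-1\}$ depending on the chosen cyclic direction, which is harmless since $1\neq |C|-1$ for $|C|\geq 4$.
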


Since Coxeter elements are straight \cite{speyer}, and hence coincide with their straight part, Theorem \ref{intro thm combinatorial} is a combination of Theorem \ref{thm cliques} and Corollary \ref{cor main rank one}(2)$\Leftrightarrow$(7). Recall that a graph is bipartite if and only if it has no odd cycle.

\begin{Rem}
    Let us stress that we use the Coxeter convention for $\Gamma$, where two generators are connected by an edge if they \emph{do not} commute. This is the precise opposite of the convention used for right angled Coxeter groups in geometric group theory.
\end{Rem}

We start with a reduction to the right angled case.

\begin{Def}
\label{def racg}
    A Coxeter group is \emph{right angled} if all edges of the Coxeter graph are labeled by $\infty$, that is $m_{ij} \in \{ 2, \infty \}$ for all $i \neq j$. Given a Coxeter system $(W, S)$, its \emph{right angled cover} $(W_r, S)$ is obtained by replacing all labels other than $2$ with infinity. In other words, the group $W_r$ is defined by the sub-presentation of $W$ where we only retain the commuting relations. It comes with a canonical quotient map $W_r \to W$ that restricts to the identity on $S$. Note that the Coxeter graphs of $W$ and $W_r$ differ only by their labels.
\end{Def}

\begin{Lem}
\label{lem:elements:cover}
    Let $(W, S)$ be a Coxeter system and let $(W_r, S)$ be its right angled cover. Then the quotient $W_r \to W$ induces a bijection between Coxeter elements of $W_r$ and Coxeter elements of $W$. If this bijection maps $w_r$ to $w$, then $w_r$ is conjugate to $w_r^{-1}$ in $W_r$ if and only if $w$ is conjugate to $w^{-1}$ in $W$.
\end{Lem}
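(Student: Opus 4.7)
The plan is to prove the two assertions in order.

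For the \textbf{bijection}, the key point is that every reduced expression of a Coxeter element of $W$ is itself a Coxeter word. Indeed, a Coxeter word yields $\ell_S(c) \leq |S|$; conversely, since $\Pc(c) = W$ by \cite[Corollary~4.3]{Caprace2009}, no reduced expression of $c$ can omit a generator (else $c$ would lie in a proper parabolic subgroup), so every reduced expression has length exactly $|S|$ with each generator appearing once. By Matsumoto's theorem, any two such reduced expressions are connected by a sequence of braid moves $(sts\cdots)_{m_{ij}} \leftrightarrow (tst\cdots)_{m_{ij}}$, with all intermediate words themselves reduced expressions for $c$, hence Coxeter words. A braid move with $m_{ij} \geq 3$ would introduce a repeated letter, contradicting the Coxeter-word property; so only commutation moves ($m_{ij} = 2$) can appear. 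These are exactly the non-trivial defining relations of $W_r$, so two Coxeter words represent the same element of $W$ if and only if they represent the same element of $W_r$. The quotient $\pi \colon W_r \to W$ then induces the claimed bijection.

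For the \textbf{conjugacy} equivalence, one direction is immediate from applying $\pi$. For the converse, my plan is to show that the conjugacy class of a Coxeter element depends only on the unlabelled graph $\Gamma$. Concretely, I aim to establish that two Coxeter elements of a Coxeter group with graph $\Gamma$ are conjugate if and only if their Coxeter words are related by a sequence of cyclic shifts (rotating the leftmost letter to the right end, which is a conjugation by that letter) and commutations of adjacent commuting letters. Since both types of moves depend only on $\Gamma$ and produce conjugate elements in any Coxeter group with that graph, the condition ``the Coxeter word of $w^{-1}$ is reachable from that of $w$ by such moves'' is the same in $W$ and in $W_r$, giving the converse.

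The \textbf{main obstacle} is this combinatorial characterization of conjugacy for Coxeter elements in Coxeter groups of indefinite type. The ``if'' direction is trivial; the ``only if'' direction is classical for finite Coxeter groups (via Eriksson--Eriksson or Geck--Pfeiffer cyclic-shift theory), and can be rephrased in terms of orbits of acyclic orientations of $\Gamma$ under source-to-sink flips, with reversal corresponding to the inverse. To handle the general case, I would exploit that a Coxeter element is cyclically reduced and straight, and adapt cyclic-shift arguments reduced-expression-by-reduced-expression along the conjugating path, using that the ``each letter exactly once'' property of Coxeter words blocks any intermediate braid move of length $\geq 3$—the same mechanism underlying the bijection. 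As a fallback, I would pass via Theorem~\ref{thm achiral straight conjugate involutions} to a conjugating involution $x \in W$, and try to show, by induction on $\ell_S(x)$, that a natural word-level lift of $x$ to $W_r$ conjugates $w_r$ to $w_r^{-1}$ modulo adjustments that use only commutations.
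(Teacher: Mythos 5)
Your bijection argument is correct and is essentially the paper's: surjectivity is clear, and injectivity follows from Tits/Matsumoto plus the observation that a braid move of length $\geq 3$ cannot be applied inside a word in which every letter occurs exactly once, so only commutation moves (available in $W_r$) can relate two Coxeter words representing the same element of $W$. The detour through $\Pc(c)=W$ is unnecessary (the support of an element is independent of the reduced expression), but harmless.

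The conjugacy statement is where there is a genuine gap. The direction you call immediate (conjugacy in $W_r$ implies conjugacy in $W$) is indeed immediate; the substance is the converse, and your plan for it amounts to \emph{proving} that two Coxeter elements of an arbitrary Coxeter group are conjugate if and only if their Coxeter words are related by cyclic shifts and commutations (equivalently, by source-to-sink flips of the associated acyclic orientations). You correctly identify this as the main obstacle, but you do not prove it, and the strategies you sketch do not close it. This statement is exactly what the paper invokes from the literature (the description of conjugacy classes of Coxeter elements cited in the proof, and again in Lemma \ref{lem cycle}); it is classical for finite Coxeter groups via cyclic-shift/Geck--Pfeiffer theory, but the relevant case here is irreducible indefinite type, where the result is substantially harder and was only established in the cited work --- cyclic-shift arguments ``reduced-expression-by-reduced-expression along the conjugating path'' do not extend in any straightforward way, since a conjugating element need not act through length-preserving elementary conjugations. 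Your fallback is also not viable as stated: Theorem \ref{thm achiral straight conjugate involutions} produces a conjugating involution $x\in W$, but there is no mechanism offered for lifting $x$ to an element of $W_r$ that conjugates $w_r$ to $w_r^{-1}$; the kernel of $W_r\to W$ is enormous, and the assertion that a conjugation in $W$ can be witnessed ``modulo adjustments that use only commutations'' is precisely the content of the theorem you are trying to avoid. So the proposal is complete only modulo citing (or reproving) that classification of conjugacy classes of Coxeter elements, which is the one nontrivial input of the paper's proof.
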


\begin{proof}
    The surjection $W_r \to W$ restricts to a surjection from the Coxeter elements of $W_r$ to those of $W$. To see injectivity: if two Coxeter words represent the same element in $W$, then by the solution to the word problem \cite{Tits1969} this can be witnessed using only braid moves. Because every generator appears exactly once, the only braid moves that can be applied are commuting relations, which are already available in $W_r$. Finally, from the description of conjugacy classes of Coxeter elements \cite{cc:coxeter}, we see that whether a Coxeter element is conjugate to its inverse can be witnessed by only using the commuting relations, and therefore holds for $w_r \in W_r$ if and only if it holds for its image $w \in W$.
\end{proof}

The proof of Theorem \ref{thm cliques} will reduce to the following special cases.

\begin{Lem}
\label{lem cycle}
    Let $(W, S)$ be a Coxeter system, and let $\Gamma$ be its Coxeter graph.
    \begin{enumerate}
        \item Suppose that $\Gamma$ is an odd cycle. Then no Coxeter element is conjugate to its inverse.
        \item Suppose that $\Gamma$ is a cycle. Then there exists a Coxeter element that is not conjugate to its inverse.
    \end{enumerate}
\end{Lem}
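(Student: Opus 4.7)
My plan is to first reduce both parts to the right-angled case via Lemma~\ref{lem:elements:cover}, then invoke the combinatorial description of conjugacy classes of Coxeter elements from \cite{cc:coxeter}: two Coxeter elements are conjugate if and only if their associated acyclic orientations of $\Gamma$ (with each edge oriented from the earlier to the later letter of any representing Coxeter word) lie in the same orbit under the \emph{click} operation, which reverses all edges incident to a source vertex, turning it into a sink. In this language, the inverse $w^{-1}$ corresponds to the globally reversed orientation $-\eta$, so the task becomes to decide when an acyclic orientation of a cycle is click-equivalent to its reverse.

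Label the cycle as $s_1 - s_2 - \cdots - s_n - s_1$ with $n \geq 3$, and fix the reference direction of each edge $e_i = \{s_i, s_{i+1}\}$ to point from $s_i$ to $s_{i+1}$ (indices mod $n$). For an orientation $\eta$ of $\Gamma$, I introduce the invariant
\[
\phi(\eta) \coloneqq \#\{i : \eta(e_i) = +\}.
\]
The key claim is that $\phi$ is preserved by clicks. At any vertex $v = s_i$, the reference has $e_{i-1}$ incoming and $e_i$ outgoing, so $v$ is a source under $\eta$ precisely when $\eta(e_{i-1}) = -$ and $\eta(e_i) = +$; clicking $v$ flips both signs, turning the local pattern $(-, +)$ into $(+, -)$ and leaving $\phi$ unchanged.

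Since $\phi(-\eta) = n - \phi(\eta)$, an acyclic orientation $\eta$ is click-equivalent to $-\eta$ only when $\phi(\eta) = n/2$. For part (1), with $n$ odd this equation has no integer solution, so no acyclic orientation is click-equivalent to its reverse; hence no Coxeter element of the right-angled cover is conjugate to its inverse, and the same holds in $W$ by Lemma~\ref{lem:elements:cover}. For part (2), the odd case follows from (1); for even $n \geq 4$, the acyclic orientation with exactly one positive edge (which is indeed acyclic, being neither all $+$ nor all $-$) satisfies $\phi(\eta) = 1 \neq n - 1 = \phi(-\eta)$, producing a Coxeter element that is not conjugate to its inverse. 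The main difficulty I anticipate is setting up cleanly the dictionary between conjugation of Coxeter words, commutations, clicks, and acyclic orientations, and extracting the precise statement of \cite{cc:coxeter} in the right-angled form needed; once that is in place, the invariance of $\phi$ and the concluding case analysis are immediate.
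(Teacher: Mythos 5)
Your proposal is correct and follows essentially the same route as the paper: the paper cites the classification of conjugacy classes of Coxeter elements on a cycle via the \emph{curl} invariant, which is exactly your $\phi$ up to the affine change $\mathrm{curl}=2\phi-n$, and likewise uses that inversion negates it (so conjugacy to the inverse forces the invariant to vanish, impossible in the odd case and avoidable by an explicit witness in general). The only cosmetic differences are that you re-derive the click-invariance of $\phi$ instead of quoting the ``same curl'' statement directly, and you pass through the right-angled cover, which is harmless but not needed.
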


\begin{proof}    
    Suppose that the Coxeter graph $\Gamma$ is a cycle. Given a Coxeter word $\mathtt{w} = s_1 \cdots s_n$, we define its \emph{curl} to be the number of edges $s_i s_j$ such that $i < j$, minus the number of edges $s_i s_j$ such that $i > j$. Combining \cite[Theorem 1.6]{cc:coxeter:old} and \cite[Theorem 1.1]{cc:coxeter}, we see that two Coxeter words represent conjugate elements if and only if they have the same curl. Moreover, $\mathtt{w}^{-1} = s_n \cdots s_1$ has the opposite curl as $\mathtt{w}$. So if $\mathtt{w}$ represents a Coxeter element that is conjugate to its inverse, then it must have curl $0$.

    (1) If the cycle is odd, then the curl of any Coxeter word is odd, in particular non-zero, so no Coxeter element is conjugate to its inverse.

    (2) Let $\mathtt{w} = s_1 \cdots s_n$ be a Coxeter word oriented along the cycle. It has curl $n-1 > 0$, so the Coxeter element it represents is not conjugate to its inverse.
\end{proof}

\begin{proof}[Proof of Theorem \ref{thm cliques}]
    (1) Suppose that $\Gamma$ is bipartite. Choose a bipartition with parts $\{ s_1, \ldots, s_i \}$ and $\{ s_{i+1}, \ldots, s_n \}$. Then $w = (s_1 \cdots s_i) (s_{i+1} \cdots s_n)$ is a product of two involutions, and therefore conjugate to its inverse.
    Conversely, suppose that there exists a Coxeter element $w \in W$ that is conjugate to its inverse. By Lemma \ref{lem:elements:cover}, we may assume that $W$ is right angled. Suppose by contradiction that $\Gamma$ is not bipartite. Pick a minimal odd cycle $\Delta$ in $\Gamma$, and let $V$ be the corresponding parabolic subgroup. By minimality, $\Delta$ is the Coxeter graph of $V$ (i.e. there are no chords). Moreover, the retraction $W \to V$ maps $w$ to a Coxeter element in $V$ that is conjugate to its inverse. This contradicts Lemma \ref{lem cycle}(1).

    (2) We again reduce to the right angled case by Lemma \ref{lem:elements:cover}. If $\Gamma$ is a tree, then all Coxeter elements are conjugate \cite[Proposition 2.3]{cc:coxeter}. Conversely, suppose that $\Gamma$ is not a tree. Let $\Delta$ be a minimal cycle in $\Gamma$, and let $V$ be the corresponding parabolic subgroup. Again, $\Delta$ is the Coxeter graph of $V$, so considering the retraction $W \to V$ we conclude by Lemma \ref{lem cycle}(2).
\end{proof}

\begin{Cor}
\label{cor cliques general}
    Let $(W, S)$ be a Coxeter system.
    \begin{enumerate}
        \item There exists a Coxeter element $w$ such that $\{\rl(w^n)\}_{n \geq 1}$ is bounded if and only the Coxeter graph of every indefinite component is bipartite.
        \item $\{\rl(w^n)\}_{n \geq 1}$ is bounded for every Coxeter element $w$ if and only if the Coxeter graph of every indefinite component is a tree.
    \end{enumerate}
\end{Cor}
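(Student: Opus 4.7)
The plan is a clean reduction to the irreducible indefinite case, i.e.\ to Theorem \ref{intro thm combinatorial}. First, decompose $W = W_0 \times W_1 \times \cdots \times W_r$ as in Lemma \ref{lem reduction}, with $W_0$ the product of the finite and affine components and $W_1, \ldots, W_r$ the irreducible indefinite components; correspondingly $S$ splits as a disjoint union $S_0 \sqcup \cdots \sqcup S_r$ of sets whose generators commute across distinct components. Since a Coxeter word contains each generator of $S$ exactly once, any Coxeter element $w$ of $W$ factors uniquely as $w = w_0 w_1 \cdots w_r$ with each $w_i$ a Coxeter element of $W_i$, and conversely every tuple of Coxeter elements, one per factor, arises this way. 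Thus the Coxeter elements of $W$ are parametrised \emph{independently} by tuples of Coxeter elements of the $W_i$.

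By Lemma \ref{lem reduction}, $\{\rl_W(w^n)\}_{n \geq 1}$ is bounded if and only if $\{\rl_{W_i}(w_i^n)\}_{n \geq 1}$ is bounded for every $i \in \{0, 1, \ldots, r\}$. The condition on the $W_0$-factor is automatic, since reflection length is uniformly bounded on finite and affine Coxeter groups (as recalled just before Lemma \ref{lem reduction}). Hence boundedness of $\{\rl_W(w^n)\}$ is equivalent to boundedness of $\{\rl_{W_i}(w_i^n)\}$ for each indefinite component $i \in \{1, \ldots, r\}$.

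With these reductions in place, both statements become formal. For (1), the existential quantifier commutes with the independent product parametrisation of Coxeter elements, so the existence of a Coxeter element $w \in W$ with bounded powers is equivalent to the existence, in each indefinite component $W_i$, of a Coxeter element $w_i$ with $\{\rl(w_i^n)\}$ bounded; by Theorem \ref{intro thm combinatorial}(1) this means exactly that every $\Gamma_i$ is bipartite. For (2), the universal quantifier analogously reduces the statement to every Coxeter element of every indefinite $W_i$ having bounded powers, which by Theorem \ref{intro thm combinatorial}(2) is equivalent to every $\Gamma_i$ being a tree. There is no genuine obstacle: all substantive content already lives in the earlier results, and this corollary merely packages them via the product decomposition.
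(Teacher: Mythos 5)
Your argument is correct and matches the paper's proof: the paper likewise decomposes $W$ into its components, notes that Coxeter elements of $W$ are exactly products of Coxeter elements of the components, and combines Lemma \ref{lem reduction} with the irreducible indefinite case (your citation of Theorem \ref{intro thm combinatorial} is just the paper's combination of Theorem \ref{thm cliques} and Corollary \ref{cor main rank one}). Your explicit handling of the quantifiers and of the finite/affine factor simply spells out what the paper leaves implicit.
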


\begin{proof}  
    A Coxeter element in $W$ is a product of Coxeter elements of each component. Combine Lemma \ref{lem reduction}, Corollary \ref{cor main rank one} and Theorem \ref{thm cliques}.
\end{proof}

\bibliographystyle{amsplain}
\bibliography{ref.bib}
\end{document}